\let\mathcal\mathscr
\title{\sc Categorical crepant resolutions for quotient singularities}
\author{\sc Roland Abuaf \footnote{Imperial College London, South Kensington Campus, London SW7 2AZ, United-Kingdom. E-mail :\textit{r.abuaf@imperial.ac.uk}. Supported by EPSRC programme grant $EP/G06170X/1$.}} 
\date{\today}
\let\mathcal\mathscr
\newtheorem{theo}{Theorem}[subsection]
\newtheorem{prop}[theo]{Proposition}
\newtheorem{defi}[theo]{Definition}
\newtheorem{cor}[theo]{Corollary}
\newtheorem{conj}[theo]{Conjecture}
\def\DB{\mathrm{D^{b}}}
\def\OO{\mathcal{O}}
\def\DP{\mathrm{D^{perf}}}
\def\DU{\mathrm{D}}
\def\R0{\mathrm{R^{0}}}
\def\Hh{\mathrm{H}om}
\def\H{\mathcal{H}}
\def\LL{\mathrm{\textbf{L}}}
\def\RR{\mathrm{\textbf{R}}}
\def\OO{\mathcal{O}}
\def\d{\delta}
\def\FF{\mathcal{F}}
\def\GG{\mathcal{G}}
\def\G{\mathrm{G}}
\def\T{\mathcal{T}}
\def\AA{\mathcal{A}}
\def\BB{\mathcal{B}}
\def\X{\tilde{X}}
\def\V{\tilde{V}}
\def\XX{\mathcal{X}}
\def\p12{\pi_{{\T_1},{\T_2}}}
\newenvironment{proof}
\begin{document}

\maketitle
\hspace{10 cm}  \textit{MDD}

\vspace{1,5cm}

\begin{abstract}
We discuss some ``folklore'' results on categorical crepant resolutions for varieties with quotient singularities. 

\end{abstract}

\tableofcontents
\newpage

\begin{section}{Introduction}

Let $X$ be an algebraic variety over $\mathbb{C}$. Hironaka proved that one can find a proper birational morphism $ \tilde{X}
\rightarrow X$, with $\tilde{X}$ smooth. Such an $\tilde{X}$ is called a
\textit{resolution of singularities} of $X$. Unfortunately, given an
algebraic variety $X$, there is, in general, no \textit{minimal resolution} \footnote{that is a resolution which lies under all other resolutions of singularities of $X$.} of singularities of $X$. In case $X$ is
Gorenstein, a crepant resolution of $X$ (that is a resolution $\pi : \tilde{X}
\rightarrow X$ such that $\pi^* \omega_X = \omega_{\tilde{X}}$) is often considered to be
minimal. The conjecture of Bondal-Orlov (see \cite{BO}) gives a precise meaning
to that notion of minimality:

\begin{conj}
 Let $X$ be an algebraic variety with canonical Gorenstein singularities. Assume that $X$ has a crepant resolution of singularities $\tilde{X} \rightarrow X$. Then, for any other resolution of singularities $Y \rightarrow X$, there exists a fully faithful embedding:

\begin{equation*}
\DB (\tilde{X}) \hookrightarrow \DB (Y). 
\end{equation*}
\end{conj}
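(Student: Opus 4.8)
This is, in full generality, the conjecture of Bondal and Orlov, and it is known only in special situations (dimension at most three, the toric case, the holomorphic symplectic case, and --- the case relevant to this paper --- varieties with quotient singularities). I describe the strategy that produces the embedding in the cases treated here and indicate where the general case breaks down. The plan is to realise the desired embedding as a Fourier--Mukai functor attached to a common resolution. Write $g\colon\tilde{X}\to X$ for the crepant resolution and $f\colon Y\to X$ for the other one, choose a smooth variety $W$ together with a projective birational morphism $W\to\tilde{X}\times_X Y$ dominating both factors, and let $p\colon W\to\tilde{X}$, $q\colon W\to Y$ and $h=g\circ p=f\circ q\colon W\to X$ be the induced maps. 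The candidate for the fully faithful embedding is
\[
\Phi:=\RR q_*\circ\LL p^*\colon\ \DB(\tilde{X})\longrightarrow\DB(Y),
\]
the Fourier--Mukai functor with kernel $\RR(p,q)_*\,\OO_W\in\DB(\tilde{X}\times Y)$.

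First I would record the discrepancy bookkeeping that makes the crepancy hypothesis bite. Since $W$ and $Y$ are smooth and $q$ is projective birational, $K_W=q^*K_Y+B$ with $B$ effective and $q$-exceptional; likewise $K_W=p^*K_{\tilde{X}}+A$ with $A$ effective. Crepancy of $g$ gives $K_{\tilde{X}}=g^*K_X$, hence $K_W=h^*K_X+A$; comparing with $K_Y=f^*K_X+E_Y$ for an effective $f$-exceptional $E_Y$ yields $A=q^*E_Y+B\geq B$. Thus $\tilde{X}$ carries the \emph{minimal} relative canonical class over $X$, and this minimality is precisely what will force $\LL p^*$ to be fully faithful and will keep its image away from the kernel of $\RR q_*$.

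Next comes full faithfulness of $\Phi$. Since $\tilde{X}$ is smooth it has rational singularities, so $\RR p_*\OO_W=\OO_{\tilde{X}}$, and the projection formula gives $\RR p_*\LL p^*=\mathrm{id}$; hence $\LL p^*\colon\DB(\tilde{X})\hookrightarrow\DB(W)$ is fully faithful. The real content is therefore that $\RR q_*$ is fully faithful on the subcategory $\LL p^*\DB(\tilde{X})\subset\DB(W)$. Passing to adjoints and applying Grothendieck duality --- using $q^!(-)\simeq\LL q^*(-)\ot\OO_W(B)$ together with the relation $A=q^*E_Y+B$ --- one unwinds this to certain relative Grauert--Riemenschneider/Kawamata--Viehweg vanishing statements for the $q$-exceptional divisor $B$. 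For a quotient singularity $X=V/G$ admitting a crepant resolution one has the McKay equivalence $\DB(\tilde{X})\simeq\DB_G(V)$ (Bridgeland--King--Reid, Kawamata), and the embedding $\DB_G(V)\hookrightarrow\DB(Y)$ is then obtained from the analogous kernel on a $G$-equivariant resolution of $V\times_X Y$, where the required vanishings are available because everything takes place on smooth spaces.

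The main obstacle, and the reason the statement is still only a conjecture, is exactly this full-faithfulness step: controlling the $q$-exceptional locus of an \emph{arbitrary} resolution $Y$ amounts to running a minimal model program and factoring the birational map $\tilde{X}\dashrightarrow Y$ into flops and blow-ups with effective discrepancy, which is known only in low dimension and in the toric case. For varieties with quotient singularities this difficulty evaporates: $\DB_G(V)$ is an explicit, dimension-free model of the crepant side, and all the comparisons can be carried out $G$-equivariantly on the smooth space $V$. Making these ``folklore'' reductions precise is the purpose of the rest of the paper.
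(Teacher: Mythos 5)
This statement is the Bondal--Orlov conjecture; the paper states it purely as motivation and gives no proof of it, and it remains open in general. You correctly recognise this, and your sketch of the standard strategy --- a Fourier--Mukai functor through a common resolution $W$ of $\tilde{X}\times_X Y$, the discrepancy comparison $A = q^*E_Y + B$ extracted from crepancy of $\tilde{X}\to X$ and the canonicity of $X$, and the reduction of full faithfulness of $\RR q_*$ on $\LL p^*\DB(\tilde{X})$ to vanishing statements governed by the $q$-exceptional divisor --- is a fair account of why the crepant resolution should be categorically minimal and of exactly where the argument gets stuck. Since neither you nor the paper proves the statement, there is no proof to compare; your honesty about its conjectural status is the right call.

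One caution: you list ``varieties with quotient singularities'' among the known cases and invoke the McKay equivalence $\DB(\tilde{X})\simeq \DB(Coh^{\G}(V))$ of Bridgeland--King--Reid as if it were available for any quotient singularity admitting a crepant resolution. That equivalence is only established for $\dim V \leq 3$ or in the symplectic case; for general quotient singularities both the McKay equivalence and the embedding into an arbitrary resolution $Y$ are open. Indeed, the paper's own conjecture in the last section is precisely the quotient-singularity instance of the present statement, restated with $\DB(Coh^{\G}(V))$ playing the role of the crepant side, and the paper only verifies a very special case of it (cyclic groups acting by scalars, with $Y$ the blow-up of the origin). So the difficulty does not ``evaporate'' for quotient singularities; that case is what the paper is trying to make tractable, not something already settled.
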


Varieties admitting a crepant resolution of singularities are quite rare. For instance, non-smooth Gorenstein $\mathbb{Q}$-factorial terminal
singularities (e.g. $\mathbb{C}^{2n}/ \mathbb{Z}_2$, for $n \geq 2$) never
admit crepant resolution of singularities. Thus, it seems natural to look for minimal resolutions among
\textit{categorical} ones. Kuznetsov has given a definition of categorical crepant resolution of singularities which seems very well-fit to deal with the issue of \textit{minimal categorical resolutions of singularities} (see \cite{kuz1}).

\bigskip

The goal of this short note is to prove some ``folklore'' results which concern
categorical crepant resolutions of singularities for varieties with quotient singularities. The first one asserts that  a global quotient always admits a categorical strongly crepant resolution which is non-commutative in the sense of Van den Bergh (see \cite{vdb2} for a definition of non-commutative resolution):

\begin{theo} \label{main1}
Let $V$ be a smooth quasi-projective variety and let $\G$ be a finite subgroup of $\mathrm{Aut}(V)$. Assume that the dualizing sheaf of $V$ is $\G$-equivariantly locally trivial, then $\DU(Coh^{\G}(V))$ is a categorical strongly crepant resolution of $V/ \G$.

Furthermore, there exists a sheaf of algebras $\AA$ on $V/ \G$ such that $\DB(Coh^{\G}(V)) \simeq \DB(V/ \G, \AA)$. Hence, $\DU(Coh^{\G}(V))$ is a non-commutative strongly crepant resolution of $V/{\G}$ in the sense of Van den Bergh.

\end{theo}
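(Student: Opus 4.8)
The plan is to realise $\DB(Coh^{\G}(V))$ as a categorical resolution of $Y:=V/\G$ through the quotient map $q:V\to Y$, and then read off strong crepancy from equivariant Grothendieck--Serre duality for $q$. Write $\T:=\DB(Coh^{\G}(V))$, $\pi_*:=(-)^{\G}\circ\RR q_*:\T\to\DB(Y)$ and $\pi^*:=\LL q^*:\DP(Y)\to\T$; note that $q$ is finite but in general \emph{not} flat, so $\LL q^*$ is only guaranteed to land in $\T$ on perfect complexes ($V$ being smooth). As preliminaries I would record: $Y$ has quotient singularities, hence is normal, Cohen--Macaulay and rational; and the hypothesis that $\w_V$ is $\G$-equivariantly locally trivial means exactly that $V$ has a $\G$-invariant open cover over which $\w_V$ is $\G$-equivariantly trivial, so the descent cocycle produces a line bundle $L$ on $Y$ together with a $\G$-equivariant isomorphism $\w_V\simeq q^*L$. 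In particular the action has no pseudo-reflections --- a pseudo-reflection would act by a non-trivial scalar on the fibre of $\w_V$ along its divisorial fixed locus, contradicting equivariant local triviality --- so $\G$ acts freely in codimension $1$ and $q$ is \'etale in codimension $1$.

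Next I would verify the categorical resolution axioms. $\T=\DB([V/\G])$ is smooth, $[V/\G]$ being a smooth Deligne--Mumford stack (and, should a geometric resolution be wanted, $\DB(Coh^{\G}(V))$ embeds admissibly into $\DB$ of a smooth quasi-projective variety by the usual equivariant device). Since $q$ is affine, $\RR q_*=q_*$ is exact, and $(-)^{\G}$ is exact in characteristic zero, so $\pi_*$ is well defined; the adjunction $\pi^*\dashv\pi_*$ follows from $q^*\dashv q_*$ together with $(-)^{\G}$ being left adjoint to the trivial-action functor. Finally, for $F\in\DP(Y)$ the projection formula gives $q_*\LL q^*F\simeq F\ot^{\LL}q_*\OO_V$, and applying the $\OO_Y$-linear averaging idempotent $e=|\G|^{-1}\sum_{g}g$ to the $q_*\OO_V$-factor yields $\pi_*\pi^*F\simeq F\ot^{\LL}(q_*\OO_V)^{\G}=F$. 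Thus $(\T,\pi_*,\pi^*)$ is a categorical resolution of $Y$.

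The main point is strong crepancy, which I would extract from equivariant Grothendieck duality for the finite morphism $q$. Being finite, $q$ is proper, so $\pi_*$ has a right adjoint $\pi^!$, given on a trivial-action object $F$ by $\RR\Hh_{\OO_Y}(q_*\OO_V,F)$ with its canonical $\G$-linearisation, and $\T$ carries a relative Serre functor $\mathbb{S}_{\T/Y}=(-)\ot\w_q^{\G}$ over $Y$, where $\w_q^{\G}:=\pi^!\OO_Y$ is the relative dualizing complex (in degree $\dim V-\dim Y=0$). Before computing it I would check that $Y$ is Gorenstein: applying the duality isomorphism with the dualizing complex $\w_Y^{\bullet}=\w_Y[\dim Y]$ ($\w_Y$ a priori only reflexive) and using $q^!\w_Y^{\bullet}=\w_V^{\bullet}$, one gets a $\G$-equivariant isomorphism $q_*\w_V\simeq\Hh_{\OO_Y}(q_*\OO_V,\w_Y)$ (the higher $\mathcal{E}xt$ vanishing since $q_*\OO_V$ is maximal Cohen--Macaulay over $Y$); taking $\G$-invariants, passing through coinvariants (characteristic zero) and using $(q_*\OO_V)^{\G}=\OO_Y$ gives $\w_Y\simeq(q_*\w_V)^{\G}$. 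By the hypothesis the right-hand side equals $(q_*q^*L)^{\G}=L$, a line bundle, so $Y$ is Gorenstein, $\w_Y\simeq L$, and $\w_V\simeq q^*\w_Y$ $\G$-equivariantly. Consequently $\w_q^{\G}=\w_V\ot q^*\w_Y^{-1}\simeq\OO_V$ as a $\G$-equivariant sheaf, so $\mathbb{S}_{\T/Y}\simeq\mathrm{id}_{\T}$: the resolution is strongly crepant (and a fortiori $\pi^!\simeq\pi^*$ on $\DP(Y)$). I expect this paragraph to be the real work --- running equivariant Grothendieck--Serre duality for the \emph{non-flat} finite morphism $q$, proving en route that $Y$ is Gorenstein with $\w_Y\simeq(q_*\w_V)^{\G}$, and matching ``$\w_q^{\G}$ is equivariantly trivial'' with Kuznetsov's definition of strong crepancy, i.e. exhibiting the relative Serre functor on $\T$ and identifying it with the identity.

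For the last assertion, put $\AA:=\mathcal{E}nd_{\OO_Y}(q_*\OO_V)$; here $q_*\OO_V$ is reflexive and maximal Cohen--Macaulay over $\OO_Y$. The canonical map of $\OO_Y$-algebras $q_*(\OO_V\#\G)\to\AA$ (from the action of the smash product $\OO_V\#\G$ on $\OO_V$) is an isomorphism over the free locus by Galois descent, and both sides are reflexive over $\OO_Y$, so --- since by the first paragraph the non-free locus has codimension $\geq 2$ --- it is an isomorphism. Hence $Coh(\AA)\simeq Coh(q_*(\OO_V\#\G))\simeq Coh^{\G}(V)$, and therefore $\DB(Y,\AA)\simeq\DB(Coh^{\G}(V))=\T$. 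Finally $\AA\simeq(q_*\OO_V)^{\oplus|\G|}$ is maximal Cohen--Macaulay over $\OO_Y$, while $\mathrm{gl.dim}\,\AA=\mathrm{gl.dim}(\OO_V\#\G)=\dim V=\dim Y$ because $|\G|$ is invertible and $V$ is smooth; as $\OO_Y$ is normal and Gorenstein and $\AA$ is the endomorphism algebra of a reflexive module, $\AA$ is a non-commutative (strongly) crepant resolution of $Y$ in the sense of Van den Bergh, which together with the previous paragraphs completes the proof.
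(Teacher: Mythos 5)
Your proposal is correct, and the first half (categorical resolution axioms, $\pi_*\pi^*\simeq id$ via the averaging idempotent, and strong crepancy from Grothendieck duality for the finite morphism $q$ together with the triviality of $\pi^!\OO_Y=\w_V\ot q^*\w_Y^{-1}$) is essentially the paper's argument, just packaged through the relative dualizing complex rather than through the paper's explicit chain of $\RR\mathcal{H}om(-,\OO_V)$ identities; you do add a self-contained duality proof that $Y$ is Gorenstein with $\w_Y\simeq(q_*\w_V)^{\G}$, where the paper simply invokes descent theory for quotients by finite groups. The genuinely different route is the last part. The paper produces the sheaf of algebras by exhibiting $\GG=\bigoplus_i\OO_V\ot V_{\rho_i}$ as a tilting bundle relative to $\pi_*^{\G}$ (exactness of $\pi_*^{\G}$, restriction to fibres of $\pi$, Schur's lemma) and sets $\AA=\pi_*^{\G}\mathcal{E}nd(\GG)$; you instead identify $Coh^{\G}(V)$ with modules over the skew algebra $q_*(\OO_V\#\G)$ and then prove the Auslander-type isomorphism $q_*(\OO_V\#\G)\simeq\mathcal{E}nd_{\OO_Y}(q_*\OO_V)$ by reflexivity plus the observation --- correctly deduced from equivariant local triviality of $\w_V$ --- that the action has no pseudo-reflections, so $q$ is \'etale in codimension one. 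Your route yields an honest abelian equivalence $Coh(\AA)\simeq Coh^{\G}(V)$ and a Morita-equivalent but more ``classical'' NCCR presentation $\mathcal{E}nd_{\OO_Y}(q_*\OO_V)$; it also has the merit of not using flatness of $q$, which the paper's tilting argument invokes but which fails whenever $V/\G$ is singular (the paper's reduction to fibres should really be run using only finiteness and exactness of $\pi_*^{\G}$). The only points you should still write out in full are the existence and the stated form $(-)\ot\pi^!\OO_Y$ of the relative Serre functor in Kuznetsov's sense, which is exactly the computation the paper performs explicitly, and the extension of $\pi^*$ to all of $\DU(V/\G)$ (not just $\DP$), which is needed for the definition of a categorical resolution and is harmless at the level of unbounded derived categories.
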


\begin{cor} \label{meuh}
Let $V$ be a finite dimensional vector space and let $\G$ be a finite subgroup of $\mathrm{SL}(V)$. Then $X/ \G$ admits a non-commutative strongly crepant resolution in the sense of Van den Bergh.
\end{cor}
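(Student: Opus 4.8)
The plan is to deduce Corollary \ref{meuh} directly from Theorem \ref{main1}, the only real task being to verify that the hypotheses of that theorem hold for the pair $(V,\G)$ (we read the statement with $X = V$, i.e. for the quotient $V/\G$). First I would note that a finite-dimensional vector space $V$, being an affine space, is a smooth quasi-projective variety, and that $\G \subset \mathrm{SL}(V) \subset \mathrm{GL}(V)$ acts on $V$ by linear automorphisms, so that $\G$ is a finite subgroup of $\mathrm{Aut}(V)$. It then remains to check the one substantive assumption, namely that the dualizing sheaf $\omega_V$ is $\G$-equivariantly locally trivial.

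Next I would make the canonical $\G$-linearization on $\omega_V$ explicit. Writing $n = \dim V$, the cotangent sheaf is $\Omega_V^1 \simeq V^{\ast} \ot_{\mathbb{C}} \OO_V$ with $\G$ acting diagonally, so that $\omega_V = \wedge^{n}\Omega_V^1 \simeq (\wedge^{n} V^{\ast}) \ot_{\mathbb{C}} \OO_V$, and $\G$ acts on the one-dimensional space $\wedge^{n} V^{\ast}$ through the character $g \mapsto \det(g)^{-1}$. Since $\G \subset \mathrm{SL}(V)$, this character is trivial, so $\omega_V$ is $\G$-equivariantly isomorphic to $\OO_V$ with its tautological linearization; in particular it is globally, hence a fortiori locally, $\G$-equivariantly trivial.

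With the hypotheses verified, Theorem \ref{main1} applies verbatim: $\DU(Coh^{\G}(V))$ is a categorical strongly crepant resolution of $V/\G$, and there is a sheaf of algebras $\AA$ on $V/\G$ with $\DB(Coh^{\G}(V)) \simeq \DB(V/\G,\AA)$, so that $\DU(Coh^{\G}(V))$ is a non-commutative strongly crepant resolution of $V/\G$ in the sense of Van den Bergh. The whole content of the corollary is thus concentrated in the determinant computation above: it is precisely the condition $\det g = 1$ that trivializes the canonical linearization on $\omega_V$, and this is the one step that genuinely distinguishes finite subgroups of $\mathrm{SL}(V)$ from arbitrary finite subgroups of $\mathrm{GL}(V)$ — for the latter $V/\G$ need not even be Gorenstein, and Theorem \ref{main1} does not apply. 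I do not expect any other obstacle; everything else is an immediate specialization of the main theorem.
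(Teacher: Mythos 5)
Your proof is correct and follows exactly the route the paper takes: the paper likewise observes that a subgroup of $\mathrm{SL}(V)$ acts trivially on the volume form, so that $\omega_V$ is $\G$-equivariantly trivial, and then invokes Theorem \ref{main1}. Your version merely spells out the determinant computation on $\wedge^{n}V^{\ast}$ that the paper leaves implicit.
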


Corollary \ref{meuh} is well-known and appeared already many times in the literature (see \cite{vdb2}). A statement analogous to the first part of Theorem \ref{main1} was already foreseen more than 10 years ago, for instance by Bridgeland in \cite{bridge}. Our second result deals with the global case:

\begin{theo} \label{main2}
Let $X$ be a quasi-projective variety with normal Gorenstein quotient singularities and let $\XX$ be the smooth Deligne-Mumford stack whose coarse moduli space is $X$. Assume that the dualizing line bundle of $\XX$ is the pull-back of the dualizing bundle of $X$, then $\DU(\XX)$ is a categorical strongly crepant resolution of $X$.

Furthermore,  there exists a sheaf of algebras $\AA$ on $X$ such that $\DB(\XX) \simeq \DB(X, \AA)$. Hence, $\DU(\XX)$ is a non-commutative strongly crepant resolution of $X$ in the sense of Van den Bergh.
\end{theo}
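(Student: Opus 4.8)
The plan is to reduce Theorem \ref{main2} to Theorem \ref{main1} by working \'etale-locally on $X$. The starting observation is that since $X$ has quotient singularities, there is an \'etale cover $\{U_i \to X\}$ such that each $U_i \simeq V_i/\G_i$ with $V_i$ smooth and $\G_i$ a finite subgroup of $\mathrm{Aut}(V_i)$; the stack $\XX$ restricted over $U_i$ is then (\'etale-locally) the quotient stack $[V_i/\G_i]$, and $\DU(\XX)|_{U_i} \simeq \DU(Coh^{\G_i}(V_i))$. The hypothesis that $\w_\XX$ is the pull-back of $\w_X$ translates, after passing to these \'etale charts, precisely into the hypothesis of Theorem \ref{main1}: the dualizing sheaf of $V_i$ is $\G_i$-equivariantly locally trivial. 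So the local building blocks are already known to be strongly crepant categorical resolutions.

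Next I would assemble the sheaf of algebras. The standard way: choose a generator of $\DB(\XX)$ of the form $p_* \EE$, where $p : \XX \to X$ is the structure map and $\EE$ is a suitable vector bundle on $\XX$ (for instance, built from the regular representation in each local chart, glued — here one uses that $\XX$ is a smooth, separated, generically-trivial-gerbe-free Deligne--Mumford stack with projective coarse space, so such a tilting-type generating object exists); set $\AA = p_* \mathcal{E}nd_\XX(\EE)$, a coherent sheaf of $\OO_X$-algebras. The projection formula together with the fact that $p$ is the coarse moduli map (so $Rp_* \OO_\XX = \OO_X$, and more generally $Rp_*$ is exact on the relevant objects since the stabilizers have order prime to... — actually one only needs $p$ to have no higher direct images on the generator, which holds because $p$ is finite on coarse spaces of DM stacks in char $0$) yields $\DB(\XX) \simeq \DB(X,\AA)$ by the usual Morita-type argument (Be\u\i linson/Bondal--Van den Bergh tilting). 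That $\AA$ is maximal Cohen--Macaulay over $X$, and that it has finite global dimension (indeed, \'etale-locally it is $Coh^{\G_i}(V_i)$, which has global dimension $\dim V_i$), are both \'etale-local statements, hence follow from the corresponding facts established in the proof of Theorem \ref{main1}.

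Finally, the strong crepancy of the categorical resolution $\DU(\XX) \to \DB(X)$ — i.e. that the kernel functor and its adjoints interact correctly with the relative dualizing complex, in Kuznetsov's sense — is again checked \'etale-locally: crepancy is a condition on the behaviour of the (co)units of adjunction composed with Serre functors, and all of these are compatible with \'etale base change $U_i \to X$. Since over each $U_i$ we have exactly the situation of Theorem \ref{main1}, strong crepancy descends. The functor $\DB(X) \to \DB(\XX)$ realizing the resolution is $p^* = $ pullback along $\XX \to X$ (or rather its derived version followed by the inclusion into the bounded derived category, using that $X$ has rational — indeed quotient — singularities so $p^*$ lands where it should); its fully faithfulness on perfect complexes follows from $Rp_*\OO_\XX = \OO_X$ and the projection formula.

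I expect the main obstacle to be purely bookkeeping rather than conceptual: constructing the \emph{global} sheaf of algebras $\AA$, or equivalently the global generating object $\EE$ on $\XX$, and verifying that the local equivalences $\DB(\XX)|_{U_i} \simeq \DB(U_i,\AA|_{U_i})$ glue to a global equivalence, requires a little care with the stacky structure (choices of local linearizations of $\w_\XX$, compatibility of the regular-representation bundles on overlaps). Once the global tilting object is in hand, everything else — maximal Cohen--Macaulayness of $\AA$, finiteness of its global dimension, and strong crepancy — is \'etale-local and is handed to us by Theorem \ref{main1}. A secondary technical point is to make sure that the hypothesis ``$\w_\XX = p^*\w_X$'' is genuinely equivalent, chart by chart, to the $\G_i$-equivariant local triviality of $\w_{V_i}$; this is where the Gorenstein assumption on $X$ is used, to guarantee $\w_X$ is a line bundle so that the comparison makes sense.
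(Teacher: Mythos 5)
Your overall strategy --- reduce to Theorem \ref{main1} on an \'etale cover $U_i \simeq V_i/\G_i$ of $X$ and then glue --- is a genuinely different route from the paper, which argues globally: it establishes the categorical resolution directly from exactness of $\pi_*$ and $\pi_*\OO_{\XX}=\OO_X$ for the coarse moduli map, obtains weak crepancy from Grothendieck duality for separated Deligne--Mumford stacks (the right adjoint of $\pi_*$ on $\DP(X)$ is $\LL\pi^*\otimes\omega_{\XX}\otimes\pi^*\omega_X^{-1}$, which is $\LL\pi^*$ under the hypothesis), and gets the tilting bundle from Kresch's theorems on generating sheaves. Your identification of the hypothesis ``$\omega_{\XX}=\pi^*\omega_X$'' with the $\G_i$-equivariant local triviality of $\omega_{V_i}$ on the charts is correct and is essentially how the paper motivates the hypothesis.

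However, two steps that you file under ``bookkeeping'' are in fact the substantive inputs, and as written they are gaps. First, the global generating vector bundle cannot be produced by gluing the regular-representation bundles $\OO_{V_i}\otimes\mathbb{C}[\G_i]$ from the charts: there are no canonical isomorphisms on overlaps (the local presentations $V_i/\G_i$ are not canonical and the groups $\G_i$ vary), and such local bundles do not in general descend to a bundle on $\XX$. The existence of a vector bundle $\GG$ on $\XX$ with $\pi_*\H om(\GG,\FF)=0\Rightarrow\FF=0$ is precisely the content of the resolution-property/generating-sheaf theorems for Deligne--Mumford stacks with quasi-projective coarse space (Kresch, Theorems $4.4$ and $5.3$); this has to be cited as a theorem, not derived by patching. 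Second, ``strong crepancy descends along the \'etale cover'' presupposes that you already have a globally defined natural transformation $\RR\pi_*\RR\H om(\GG_1,\GG_2)\rightarrow \RR\H om(\RR\pi_*\RR\H om(\GG_2,\GG_1),\OO_X)$ whose being an isomorphism can then be tested locally; constructing that global morphism is exactly what Grothendieck duality for Deligne--Mumford stacks provides, and it does not come for free from the charts. With those two inputs made explicit your argument closes, but at that point it has essentially become the paper's proof. (Minor points: the coarse space is only assumed quasi-projective, and exactness of $\pi_*$ holds because in characteristic zero every Deligne--Mumford stack is tame, not because of any primality condition on the stabilizers.)
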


Theorem \ref{main1} is a consequence of Theorem \ref{main2}. However, I can give a proof of the former which is very low-tech and very direct, whereas the proof of the latter involves some delicate results about Deligne-Mumford stacks. Thus, I believe that it is interesting for the non-expert reader to have both results stated and proved separately.

The two aforementioned results are certainly well-known to experts and I claim no originality for them. Nevertheless, as I could not find any reference, the aim of this note is to put them on a firm ground. As such, I believe that they are already interesting as they provide nice examples of categorical/non-commutative strongly crepant resolutions of singularities. However, from the point of view of \textit{Holomorphically Symplectic Categories} (which will be developed and discussed at length in the forthcoming \cite{abuaf-HKC}), they are of very high importance. Indeed, we have the following corollary of Theorem \ref{main1}:

\begin{cor}
Let $X$ be a smooth projective holomorphically symplectic variety and let $\G$ be a finite subgroup of $\mathrm{Aut}(X)$. Assume that the symplectic form of $X$ is $\G$-equivariant, then $\DB(Coh^{\G}(X))$ is a holomorphically symplectic category.
\end{cor}

Finally, in the last section of this paper, I will compare the strongly crepant resolution discussed above and the derived category of the classical resolution of a variety with scalar cyclic singularities.

\begin{theo}
Let $\G$ be a cyclic group of order $d$ acting on $\mathbb{C}^n$ by translations (with $d$ dividing $n$). Denote by $\widetilde{\mathbb{C}^n/ \G}$ the resolution of singularities of $\mathbb{C}^n/ \G$ obtained by blowing up $0$. There is fully faithful embedding:
\begin{equation*}
\DB(Coh^{\G}(\mathbb{C}^n)) \hookrightarrow \DB(\widetilde{\mathbb{C}^n/ \G}).
\end{equation*}

In particular, if $d=n$, then $\widetilde{\mathbb{C}^n/ \G}$ is a crepant resolution of $\mathbb{C}^n/ \G$ and there is an equivalence:
\begin{equation*}
\DB(Coh^{\G}(\mathbb{C}^n)) \simeq \DB(\widetilde{\mathbb{C}^n/ \G}).
\end{equation*}
\end{theo}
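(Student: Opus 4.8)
By Theorem~\ref{main1}, $\DU(Coh^{\G}(\mathbb{C}^n))$ is a categorical strongly crepant resolution of $\mathbb{C}^n/\G$; the point here is to compare it with the classical resolution $Y:=\widetilde{\mathbb{C}^n/\G}$. The plan is to resolve both derived categories by an explicit (partial) tilting bundle and to check that the two endomorphism algebras coincide. Throughout, $\G=\langle\zeta\rangle\simeq\mathbb{Z}/d$ acts on $\mathbb{C}^n$ by $\zeta\cdot x=\zeta x$, and $R=\mathbb{C}[x_1,\dots,x_n]=\bigoplus_{m\geq 0}R_m$ with $\deg x_i=1$, so $Coh^{\G}(\mathbb{C}^n)$ is the category of finitely generated $\mathbb{Z}/d$-graded $R$-modules and $R^{\G}=\bigoplus_{k\geq 0}R_{kd}$ is the $d$-th Veronese subring. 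Since $d\mid n$ we have $\G\subset\mathrm{SL}(\mathbb{C}^n)$, and $\mathbb{C}^n/\G=\mathrm{Spec}\,R^{\G}$ is the affine cone over the $d$-uple Veronese embedding $\mathbb{P}^{n-1}\hookrightarrow\mathbb{P}^{N}$; this embedding being projectively normal, blowing up the vertex gives $Y=\mathrm{Tot}_{\mathbb{P}^{n-1}}(\OO(-d))$, with blow-down $\rho:Y\to\mathbb{C}^n/\G$ and affine projection $p:Y\to\mathbb{P}^{n-1}$ such that $p_*\OO_Y=\mathrm{Sym}(\OO(d))=\bigoplus_{k\geq 0}\OO(kd)$. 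I write $\OO_Y(k):=p^*\OO(k)$.

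On the quotient side, the graded-free modules $R,R(1),\dots,R(d-1)$ are the indecomposable projectives of $Coh^{\G}(\mathbb{C}^n)$, so $T:=\bigoplus_{i=0}^{d-1}R(i)$ is a progenerator, $\DB(Coh^{\G}(\mathbb{C}^n))\simeq\DB(\mathrm{mod}\text{-}B)$ with $B:=\mathrm{End}(T)$, and a direct computation of $\mathbb{Z}/d$-degree-preserving maps gives $B_{ij}=\bigoplus_{m\geq 0,\ m\equiv j-i\,(d)}R_m$ with composition equal to polynomial multiplication (this $B$ is Morita equivalent to the skew group algebra $R\#\G$, a noetherian ring of finite global dimension). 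On the resolution side I would take $G:=\bigoplus_{i=0}^{d-1}\OO_Y(i)\in\DB(Y)$ and compute, using that $p$ is affine and the formula for $p_*\OO_Y$,
\[
\mathrm{Ext}^{q}_{Y}\!\big(\OO_Y(i),\OO_Y(j)\big)=\bigoplus_{k\geq 0}\mathrm{H}^{q}\!\big(\mathbb{P}^{n-1},\OO(j-i+kd)\big).
\]
Since $0\leq i,j\leq d-1\leq n-1$ one has $j-i+kd\geq j-i>-n$, so every summand has vanishing higher cohomology and $G$ is a partial tilting bundle. For $q=0$ the set $\{\,j-i+kd:k\geq 0\,\}\cap\mathbb{Z}_{\geq 0}$ equals $\{\,m\geq 0:m\equiv j-i\,(d)\,\}$, so $\mathrm{Hom}_Y(\OO_Y(i),\OO_Y(j))=\bigoplus_{m\geq 0,\ m\equiv j-i\,(d)}R_m$; and since for $|c|<d$ one has $\Gamma(Y,\OO_Y(c))=\bigoplus_{m\geq 0,\ m\equiv c\,(d)}R_m$ with the product induced by the algebra structure of $p_*\OO_Y$ equal to polynomial multiplication, composition in $\mathrm{End}_Y(G)$ is also polynomial multiplication. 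Hence $\mathrm{End}_Y(G)\simeq B$ as rings.

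Granting $\mathrm{End}_Y(G)\simeq B$, the derived statement is formal. As $G$ is a partial tilting bundle on the smooth variety $Y$, the functor $-\ot^{\LL}_B G:\DB(\mathrm{mod}\text{-}B)\to\DB(Y)$ is fully faithful onto the thick subcategory generated by $G$: its right adjoint is $\RR\mathrm{Hom}_Y(G,-)$, and $\RR\mathrm{Hom}_Y(G,-)\circ(-\ot^{\LL}_BG)\simeq\mathrm{id}$ because $\RR\mathrm{Hom}_Y(G,G)=\mathrm{End}_Y(G)=B$ (the dg-algebra being formal) and $B$ generates $\DB(\mathrm{mod}\text{-}B)$, while $B$ of finite global dimension and $G$ a vector bundle keep everything bounded. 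Composing with the equivalence of the previous paragraph yields $\DB(Coh^{\G}(\mathbb{C}^n))\simeq\DB(\mathrm{mod}\text{-}B)\hookrightarrow\DB(Y)$. When $d=n$, the collection $\OO_Y,\dots,\OO_Y(n-1)$ is the $p$-pullback of a full exceptional collection on $\mathbb{P}^{n-1}$; since $p$ is affine (so $p_*$ is conservative) this forces $G^{\perp}=0$, and $G$ is then a genuine tilting bundle on all of $\DB(Y)$, so the embedding is an equivalence. (Alternatively, for $d=n$ this is the Bridgeland--King--Reid equivalence, $Y$ being the $\G$-Hilbert scheme of $\mathbb{C}^n$.) Finally $\omega_Y=p^*(\omega_{\mathbb{P}^{n-1}}\ot\OO(d))=\OO_Y(d-n)$ is trivial exactly when $d=n$, in which case $\rho^*\omega_{\mathbb{C}^n/\G}=\OO_Y=\omega_Y$, so $\rho$ is crepant.

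I expect the main obstacle to lie in the geometry and bookkeeping of the first two paragraphs rather than in anything homological: identifying $Y$ with $\mathrm{Tot}_{\mathbb{P}^{n-1}}(\OO(-d))$ and computing $p_*\OO_Y$ (equivalently, that the fibrewise $\G$-quotient of $\mathrm{Tot}(\OO(-1))$ is $\mathrm{Tot}(\OO(-d))$ and that this is the blow-up of the vertex of the Veronese cone, which uses projective normality of the Veronese), and then matching the two $\mathrm{Hom}$-algebras \emph{together with their products}. Once $\mathrm{End}_Y(G)\simeq B$ is in hand, the fully faithful embedding, and its upgrade to an equivalence when $d=n$, follow from standard tilting theory.
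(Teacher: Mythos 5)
Your proposal is correct, and the key computation is the same one the paper relies on --- the vanishing $\mathrm{H}^{>0}(\mathbb{P}^{n-1},\OO(m))=0$ for $m>-n$ applied to the twists $\OO(j-i+kd)$ with $|j-i|<d\le n$, which makes $\bigoplus_{j=0}^{d-1}\OO_Y(j)$ (partial) tilting and identifies its endomorphism algebra with that of $\bigoplus_j \OO\otimes\chi_j$ on the equivariant side. But the logical packaging is genuinely different. The paper never works with graded modules or with tilting on $Y$ directly: it passes to the equivariant blow-up $\V\rightarrow V$, uses that $\DB(Coh^{\G}(V))$ sits inside $\DB(Coh^{\G}(\V))$, pushes the bundle $\bigoplus_j\OO_{\V}\otimes\chi_j$ down along the degree-$d$ cover $p:\V\rightarrow\X$ to get $\bigoplus_j t^*\OO_{\mathbb{P}^{n-1}}(-j)$, and concludes $\DB(Coh^{\G}(V))\simeq\DB(X,q_*\mathcal{E}nd(\AA))$; the fully faithful embedding into $\DB(\X)$ and the $d=n$ equivalence are then quoted from Kuznetsov's semiorthogonal decomposition (Theorem \ref{kuznetsov}), where $\T_0$ is an admissible component that equals all of $\DB(\X)$ when $d=n$. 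You instead prove the embedding from scratch by standard tilting theory ($-\otimes^{\LL}_{B}G$ with $B=\mathrm{End}_Y(G)$ of finite global dimension) and handle $d=n$ by a generation argument via the affine map $p$ and the Beilinson collection. Your route is more self-contained (no appeal to the SOD of Theorem \ref{kuznetsov}) at the price of carrying the Morita/graded-module bookkeeping and the identification $Y\simeq\mathrm{Tot}_{\mathbb{P}^{n-1}}(\OO(-d))$ explicitly; the paper's route is shorter but yields strictly more, namely that the image of the embedding is exactly Kuznetsov's strongly crepant component $\T_0$, which is the point of Section 4. Two small remarks: your $d=n$ generation argument should be phrased in $\DU(\mathrm{Qcoh})$ (since $p_*F$ of a coherent sheaf on the non-proper $Y$ is only quasi-coherent, one uses that the Beilinson collection compactly generates the unbounded category); and the crepancy computation $\omega_Y=\OO_Y(d-n)$ in your last paragraph, while correct, is not needed for the statement --- the paper takes the crepancy of the blow-up for $d=n$ from \cite{Fujiki}.
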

This last result can be seen as an instance of the McKay correspondence for scalar cyclic singularities and is also certainly to be considered as standard (it appears for instance at the $K$-theoretic level in \cite{blume}). As already noticed by Bridgeland, King and Reid, such an equivalence can not be proved using the main result of \cite{BKR}, when $n \geq 4$.

\bigskip

\textbf{Acknowledgments}. I am very grateful to Richard Thomas for mentioning and suggesting to me to prove the aforementioned results on categorical strongly crepant resolutions of quotient singularities. I would also like to thank Tom Bridgeland for sharing his unpublished preprint \cite{bridge} with me, Andrea Petracci for some useful advice on Deligne-Mumford stacks and Enrica Floris for helpful comments on a preliminary version of this paper.

\end{section}

\begin{section}{Categorical crepant resolutions of singularities}
In this section, I remind some basic facts about categorical crepant resolution of singularities and we exhibit some classical examples as they appear in \cite{kuz1}. If $X$ is an algebraic variety over $\mathbb{C}$, we denote by $\DU(X)$ (resp. $\DB(X)$) the unbounded (resp. bounded) derived category of coherent sheaves on $X$.

\begin{defi} Let $X$ be an algebraic variety. A \textit{categorical resolution} of $X$ is a smooth cocomplete compactly generated category $\T$ with a pair of adjoint functors:
\begin{equation*}
\begin{split}
& \pi_* : \T \rightarrow \DU(X) \\
& \pi^* : \DU(X) \rightarrow \T,
\end{split}
\end{equation*}

such that

\begin{itemize}
\item $\pi_* \pi^* \simeq id$,
\item $\pi^*$ and $\pi_*$ commute with arbitrary direct sums,
\item $\pi_*(\T^c) \subset \DB(X)$, where $\T^c$ is the subcategory of compact objects in $\T$.
\end{itemize}
\end{defi}
I will not discuss the details of this definition and rather refer to \cite{kuz2} where the theory is developed with great care. Let me mention one particular example of categorical resolution which is easy to understand.

\begin{prop}
Let $X$ be a quasi-projective variety with rational singularities and let $ q : \X \rightarrow X$ be a resolution of singularities. Let $\T_0$ be an admissible full subcategory of $\DB(\X)$ such that $\LL q^* \DP(X)$ embeds fully and faithfully inside $\T_0$. If $\T$ is the completion of $\T_0$ inside $\DU(\X)$ and $\d : \T \hookrightarrow \DU(\X)$ is the fully faithful embedding, then 

\begin{equation*}\begin{split}
&\RR q_* \circ \delta : \T \rightarrow \DU(X) \\
& \d^* \circ \LL q^* : \DU(X) \rightarrow \T,
\end{split}
\end{equation*}
is a categorical resolution.

\end{prop}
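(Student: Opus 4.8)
The plan is to verify directly the three axioms in the definition of a categorical resolution for the pair $\pi_* = \RR q_* \circ \d$, $\pi^* = \d^* \circ \LL q^*$; that $\pi^*$ is left adjoint to $\pi_*$ is immediate by composing the adjunctions $(\LL q^*, \RR q_*)$ and $(\d^*, \d)$. The category $\T$ is cocomplete and compactly generated with $\T^c = \T_0$, being the completion inside $\DU(\X)$ of the small idempotent-complete triangulated category $\T_0$ (idempotent-completeness is built into admissibility); it is smooth because $\DP(\X) = \DB(\X)$ is smooth ($\X$ being smooth) and admissible subcategories inherit smoothness. Admissibility of $\T_0$ in $\DB(\X)$ also equips $\d$ with a right adjoint $\d^!$ in addition to $\d^*$, so $\d$, $\d^*$ and $\LL q^*$, being left adjoints, commute with arbitrary coproducts, while $\RR q_*$ commutes with arbitrary coproducts because $q$ is a proper morphism of Noetherian schemes (hence of finite cohomological dimension); this is the second axiom. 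For the third, $\pi_*(\T^c) = \RR q_*(\T_0) \subset \RR q_*(\DB(\X)) \subset \DB(X)$, since proper direct images of coherent sheaves are coherent and vanish in high degree.

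The essential point is the first axiom, $\pi_* \pi^* \simeq \mathrm{id}$. First I would check that the essential image of $\LL q^*$ is contained in $\T$: since $X$ is quasi-projective, $\DP(X)$ is a set of compact generators of $\DU(X)$; as $\LL q^*$ commutes with coproducts and $\LL q^*\DP(X) \subset \T_0 \subset \T$ by hypothesis, with $\T$ the localizing subcategory of $\DU(\X)$ generated by $\T_0$, the functor $\LL q^*$ factors through $\T$. Hence for any $F \in \DU(X)$ we have $\LL q^* F \in \T$, so $\d \d^* \LL q^* F \simeq \LL q^* F$ (full faithfulness of $\d$ gives $\d^* \d \simeq \mathrm{id}_{\T}$, so $\d \d^*$ is the identity on the essential image of $\d$), and therefore $\pi_* \pi^* F \simeq \RR q_* \LL q^* F$. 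The projection formula identifies this with $F \ot^{\LL}_{\OO_X} \RR q_* \OO_{\X}$, and the hypothesis that $X$ has rational singularities gives $\RR q_* \OO_{\X} \simeq \OO_X$; thus $\pi_* \pi^* F \simeq F$, naturally in $F$. The same axioms show $\pi^*$ and $\pi_*$ commute with direct sums, completing the check.

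I expect the main obstacle to be the bookkeeping in the ``large'' categorical setting: one must make sure that the completion $\T$ of $\T_0$ inside $\DU(\X)$ is genuinely compactly generated with $\T^c = \T_0$, and that the adjoints $\d^*$, $\d^!$ extend from $\T_0$ to $\T$ and commute with coproducts --- this is where admissibility of $\T_0$ and the standard machinery for extending semiorthogonal decompositions to cocompletions enter --- together with the verification that $\LL q^*$ really factors through $\T$, which rests on $\DP(X)$ generating $\DU(X)$. The geometric hypothesis, rational singularities in the form $\RR q_* \OO_{\X} \simeq \OO_X$, is what makes both $\LL q^*$ fully faithful on $\DP(X)$ (so that the hypothesis on $\T_0$ is not vacuous) and $\pi_* \pi^* \simeq \mathrm{id}$; it plays here the role that crepancy plays for an honest crepant resolution.
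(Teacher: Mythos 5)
Your proposal is correct and follows essentially the same route as the paper's proof: establish cocompleteness, compact generation and smoothness of $\T$ from admissibility of $\T_0$ in $\DB(\X)$, reduce $\pi_*\pi^*\simeq \mathrm{id}$ to the hypothesis that $\LL q^*\DP(X)$ lands in $\T_0$ together with $\RR q_*\OO_{\X}\simeq\OO_X$ (rational singularities), and get the remaining axioms from adjunction/compactness and properness of $q$. The only (harmless) divergence is that you justify commutation of $\pi_*$ with coproducts via finite cohomological dimension of $\RR q_*$ rather than via the adjoint-of-a-compactness-preserving-functor argument the paper uses; otherwise you are simply supplying details the paper leaves implicit.
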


\begin{proof} Since $\X$ is a smooth quasi-projective variety, $\DU(\X)$ is cocomplete, compactly generated (\cite{Neeman}) and smooth (\cite{TV}, lemma $3.27$). But $\T$ is the completion of an orthogonal component of $\DB(\X)$, so that $\T$ is also cocomplete, compactly generated and smooth (\cite{kuz2}, for instance).

 Let me denote by $\pi_*$ the composition $\RR q_* \circ \delta$. We have $\pi_* \pi^* \simeq id$ because $\LL q^* \DP(X)$ embeds fully and faithfully in $\T_0$ and $X$ has rational singularities. The functor $\pi^*$ always commute with arbitrary direct sums whereas the functor $\pi_*$ commute with arbitrary direct sums because it is right adjoint to $\pi^*$ and $\T$ is compactly generated. Finally, the inclusion $\pi_*(\T^c) \subset \DB(X)$ is a consequence of the properness of $q$.

\end{proof}

Note that if $X$ has irrational singularities and $q : \X \rightarrow X$ is a resolution of singularities, then $\RR q_* : \DU(\X) \rightarrow \DU(X)$ is NOT a categorical resolution of singularities. Nevertheless, we still have the following :

\begin{theo}[Kuznetsov-Lunts] 
Any quasi-projective scheme (!) of finite type over $\mathbb{C}$ admits a categorical resolution of singularities.
\end{theo}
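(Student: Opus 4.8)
The plan is to argue by induction on $\dim X$, the base case $\dim X = 0$ being trivial; more generally, $\DU(X)$ itself is already a categorical resolution whenever $X$ is smooth. For the inductive step I would first reduce to a manageable geometry. By Hironaka, choose a resolution $\pi : \X \to X$ which is an isomorphism over $X \setminus Z$, where $Z \subset X$ is a closed subscheme containing $X_{\mathrm{sing}}$ with $\dim Z < \dim X$, and such that $D := \pi^{-1}(Z)_{\mathrm{red}}$ is a simple normal crossing divisor in $\X$; one may also assume $X$ quasi-projective (the statement reduces to this case, e.g. via Chow's lemma together with the flexibility of the construction below). The construction of the preceding Proposition does not apply here, since $X$ may have irrational singularities, i.e. $\RR \pi_* \OO_{\X} \neq \OO_X$; then $\RR \pi_*$ composed with any admissible inclusion cannot satisfy $\pi_* \pi^* \simeq \mathrm{id}$. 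The whole issue is to correct for the \emph{defect} $\tau_{\geq 1} \RR \pi_* \OO_{\X}$, a complex supported set-theoretically on $Z$.

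Since $\dim Z < \dim X$, the inductive hypothesis supplies a categorical resolution $\tilde{\D}_Z \to \DU(Z)$ of $Z$; after further blow-ups of $\X$ (and of the relevant modification of $Z$) I would arrange that this resolution is \emph{dominated by} $D$, in the sense that $p := \pi|_D : D \to Z$ lifts to a functor $\DB(D) \to \tilde{\D}_Z^c$ intertwining the pushforwards to $\DB(Z)$. This compatibility is really what the induction has to carry, so the inductive statement must be strengthened to include such a lift. Then form the \emph{glued} category $\tilde{\D}_X$: a cocomplete, compactly generated category with a semiorthogonal decomposition $\tilde{\D}_X = \langle \tilde{\D}_Z, \DU(\X) \rangle$, the gluing being prescribed by the composite $\DU(\X) \xrightarrow{\LL i^*} \DU(D) \to \tilde{\D}_Z$, where $i : D \hookrightarrow \X$. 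The functor $\pi_* : \tilde{\D}_X \to \DU(X)$ is then defined componentwise by $\RR \pi_*$ on $\DU(\X)$ and by $\tilde{\D}_Z \to \DU(Z) \xrightarrow{j_*} \DU(X)$ (up to a shift), $j : Z \hookrightarrow X$, glued compatibly, and $\pi^*$ is taken to be its left adjoint, which factors essentially through $\LL \pi^*$ into the $\DU(\X)$-component.

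It then remains to verify the three defining properties of a categorical resolution for $\tilde{\D}_X$. Smoothness: a gluing of smooth DG-categories along a functor that preserves compact objects and whose gluing bimodule is perfect is again smooth, and here $\DU(\X)$ is smooth because $\X$ is, $\tilde{\D}_Z$ is smooth by induction, and $\LL i^*$ followed by the lift of $\RR p_*$ has the needed finiteness. Cocompleteness and compact generation pass to semiorthogonal gluings of compactly generated categories. The inclusion $\pi_*(\tilde{\D}_X^c) \subset \DB(X)$ follows from properness of $\pi$ and $j$ and from $\pi_{Z*}(\tilde{\D}_Z^c) \subset \DB(Z)$. The delicate axiom is $\pi_* \pi^* \simeq \mathrm{id}$: for $F \in \DU(X)$ one computes $\pi_* \pi^* F$ as an extension of $\RR \pi_* \LL \pi^* F$ (the ``too large'' answer) by the contribution coming through $\tilde{\D}_Z$, and one has to show that the connecting morphism identifies that contribution with the defect $\tau_{\geq 1} \RR \pi_* \LL \pi^* F$. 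This is exactly where the domination of $\tilde{\D}_Z$ by $D$ is used, and it is the heart of the argument: it comes down to a base-change diagram chase along $D \hookrightarrow \X$, $D \xrightarrow{p} Z$, $Z \hookrightarrow X$, together with the units and counits of the adjunctions and the projection formula.

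The step I expect to be the main obstacle is precisely this last verification, together with making the gluing construction rigorous: gluing is a DG (equivalently, stable $\infty$-categorical) operation, so one must carry functorial DG-enhancements throughout and control cones of morphisms of bimodules. Intertwined with it is the need to bootstrap the inductive hypothesis into the stronger form ``categorical resolution of $Z$ plus a compatible lift for a chosen SNC-type $D \to Z$'', and to check that such compatible data can always be produced — again by blow-up bookkeeping, so the induction on dimension and the blow-up constructions are entangled. A further, more routine, point is the initial reduction to the quasi-projective (indeed essentially affine-local) setting.
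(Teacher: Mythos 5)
The paper does not actually prove this theorem: it is quoted from Kuznetsov--Lunts and the author explicitly refers the reader to \cite{kuz2} for the proof. So your proposal can only be measured against the argument in that reference, whose broad strategy (induction on dimension, a resolution with SNC exceptional locus, DG gluing of $\DU(\X)$ with categorical resolutions of lower-dimensional pieces, and a componentwise pushforward) you have correctly reconstructed. That said, what you have written is an outline rather than a proof, and the two places you yourself flag as ``the main obstacle'' are genuine gaps, not routine verifications.

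First, the axiom $\pi_*\pi^* \simeq \mathrm{id}$ does not follow from a base-change diagram chase for an arbitrary gluing of $\DU(\X)$ with a categorical resolution $\tilde{\D}_Z$ of the singular locus along $\LL i^*$. The left adjoint of the glued pushforward is computed from the gluing bimodule, and its $\tilde{\D}_Z$-component contributes a correction term that has no reason to coincide with the defect $\tau_{\geq 1}\RR\pi_*\LL\pi^* F$; in general it overcounts or undercounts. In Kuznetsov--Lunts this is resolved not by a compatibility hypothesis on $\tilde{\D}_Z$ but by a much more rigid choice of what to glue: one fixes a resolution given by a sequence of blowups with smooth centers, and glues in Lefschetz-type pieces attached to the exceptional divisors, using explicit computations of $\RR\pi_*\OO_{\X}(-kE)$ to show the unit of the adjunction is an isomorphism. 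Second, your strengthened induction hypothesis --- that the categorical resolution of $Z$ can be chosen so that $\DB(D) \to \tilde{\D}_Z^c$ lifts $\RR p_*$ compatibly --- is exactly the kind of statement that is hard to propagate through the induction, and you give no mechanism for producing such a lift; the reference sidesteps this by gluing with (resolutions of) the blowup centers themselves, for which the required functors are supplied by the geometry of the blowup. Until these two points are supplied, the proposal is a plausible plan rather than a proof.
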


I refer to \cite{kuz2} for interesting comments about this nice result and for a proof of it. In this note, we will focus on a very special type of categorical resolutions, the so-called \textit{categorical crepant resolutions} of singularities. These are categorical resolutions which mimick very well the functorial behavior one expects from the derived category of a (geometric) crepant resolution of singularities. The main point of this theory is that we can sometimes construct categorical crepant resolutions even if the singularity has no geometric crepant resolution. This highlights a new point of view on the Minimal Model Program (\cite{kuz1}, \cite{kawa}).

\begin{defi}
Let $X$ be an algebraic variety and let $\pi_* : \T \rightarrow \DU(X)$ be a categorical resolution of $X$. The category $\T$ is said to be a \textbf{weakly crepant} resolution of $Y$ if $\pi^*$ is a also a right adjoint to $\pi_*$ when restricted to $\DP(X)$.

Assume finally that $\T$ has a module structure over $X$. Then $\T$ is said to be a \textbf{strongly crepant} resolution of $X$ if the identity is a relative Serre functor for $\T^c$ with respect to $\DB(X)$.

\end{defi}

The notions of module structure and of relative Serre functor are defined in \cite{kuz1} for instance and they are as natural as one can imagine. Note however that if the categorical resolution is of ``geometric origin'', then all these notions coincide with the classical definition of crepancy. Namely, we have the proposition (\cite{theseabuaf}, prop. $1.2.12$):

\begin{prop} Let $\pi : \X \rightarrow X$ be a morphism of algebraic varieties. Then:
\begin{center}
$\RR \pi_* : \DU(\X) \rightarrow \DB(X)$ is a categorical strongly crepant resolution of singularities 
\end{center}
\begin{center}
$\Longleftrightarrow$ 
\end{center}
\begin{center}
$\RR \pi_* : \DU(\X) \rightarrow \DB(X)$ is a categorical weakly crepant resolution of singularities 
\end{center}
\begin{center}
$\Longleftrightarrow$
\end{center}
\begin{center}
 $\pi : \X \rightarrow X$ is a crepant resolution of singularities.
\end{center}

\end{prop}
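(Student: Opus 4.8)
The plan is to prove the proposition by unwinding the definitions and reducing everything to the behaviour of the relative dualizing complex $\omega_{\X/X}$. First I would observe that since $\RR\pi_*$ is assumed (or required) to be a categorical resolution, $\X$ must be smooth and $\pi$ proper and birational, so $\DU(\X)$ is automatically smooth, cocomplete and compactly generated, and $\RR\pi_*$ sends compact objects to $\DB(X)$; thus the only substantive content is the comparison of the crepancy conditions. The natural module structure on $\T=\DU(\X)$ over $X$ is the one coming from $\LL\pi^*$ and the projection formula, so the relative Serre functor of $\DU(\X)$ over $\DB(X)$ is the functor $(-)\ot\omega_{\X/X}[\dim\X-\dim X]$, i.e. $(-)\ot\omega_{\X/X}$ since the relative dimension is $0$ for a resolution. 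Hence ``the identity is a relative Serre functor'' is equivalent to $\omega_{\X/X}\simeq\OO_{\X}$, which is exactly the statement that $\pi$ is crepant. This handles the first equivalence (strongly crepant $\Leftrightarrow$ geometrically crepant).

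For the second equivalence I would argue in two directions. The implication (geometrically crepant $\Rightarrow$ weakly crepant) is the easier one: if $\omega_{\X/X}\simeq\OO_{\X}$ then by Grothendieck--Verdier duality the right adjoint of $\RR\pi_*$ on $\DP(X)$, which in general is $\LL\pi^*(-)\ot\omega_{\X/X}$, coincides with $\LL\pi^*$, which is precisely the left adjoint; so $\LL\pi^*$ is simultaneously left and right adjoint to $\RR\pi_*$ on $\DP(X)$, i.e.\ the resolution is weakly crepant. For the converse (weakly crepant $\Rightarrow$ geometrically crepant) I would suppose $\LL\pi^*$ is right adjoint to $\RR\pi_*$ on $\DP(X)$; comparing with the genuine right adjoint $\pi^{!}=\LL\pi^*(-)\ot\omega_{\X/X}$ gives a functorial isomorphism $\LL\pi^*E\simeq\LL\pi^*E\ot\omega_{\X/X}$ for all $E\in\DP(X)$; taking $E=\OO_X$ yields $\OO_{\X}\simeq\omega_{\X/X}$, hence $\pi^*\omega_X\simeq\omega_{\X}$, so $\pi$ is crepant. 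The direction strongly crepant $\Rightarrow$ weakly crepant then follows formally (it is in fact general in Kuznetsov's framework, but here one can simply cite the already-established equivalence with geometric crepancy), closing the loop.

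The main obstacle I expect is not any of these formal manipulations but making the identifications rigorous at the level of the unbounded derived category and the module structure: one must check that the relative Serre functor in the sense of Kuznetsov's definition for the module category $\DU(\X)$ over $\DB(X)$ really is computed by $-\ot\omega_{\X/X}$, and that it is unique so that ``the identity is a relative Serre functor'' forces $\omega_{\X/X}\simeq\OO_{\X}$ rather than merely an abstract isomorphism of functors that could a priori twist by a line bundle pulled back from $X$. This is where I would invoke the compatibility of Grothendieck duality with base change over $X$ and the fact that $\RR\pi_*\OO_{\X}\simeq\OO_X$ (rational singularities, automatic since $\X$ resolves $X$ with $X$ having rational — indeed here canonical Gorenstein — singularities) to pin down the twist. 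Everything else is an application of Grothendieck--Verdier duality and adjunction juggling; I would keep the write-up at the level of ``the relative Serre functor is $-\ot\omega_{\X/X}$, the right adjoint on perfect complexes is $\pi^! = \LL\pi^*(-)\ot\omega_{\X/X}$, and all three conditions are visibly equivalent to $\omega_{\X/X}\simeq\OO_{\X}$,'' referring to \cite{kuz1} and \cite{theseabuaf} for the routine verifications.
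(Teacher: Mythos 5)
Your argument is correct and is essentially the intended one: the paper gives no proof of this proposition, deferring to \cite{theseabuaf}, Prop.~1.2.12, and the verification there proceeds exactly as you describe, by identifying (via Grothendieck--Verdier duality) both the right adjoint of $\RR \pi_*$ on $\DP(X)$ and the relative Serre functor with $- \otimes \omega_{\X/X}$ and then evaluating at $\OO_X$. The two caveats you flag yourself --- that $X$ must be Gorenstein for $\pi^{!}\OO_X$ to be a (shifted) line bundle and for ``crepant'' to be meaningful, and that the relative Serre functor is unique up to canonical isomorphism so that triviality of the functor really forces $\omega_{\X/X} \simeq \OO_{\X}$ --- are precisely the points handled in the cited reference, so your write-up needs no further ideas.
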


The existence of weakly crepant resolutions of singularities has been proved in a quite general context (see \cite{abuaf1, abuaf2}). For instance, it is proved in \cite{abuaf1} that all Gorenstein determinantal varieties (general, symmetric, skew-symmetric) admit weakly crepant resolution of singularities. The existence of strongly crepant resolution seems to be a much more delicate issue. Together with Kuznetsov, we expect the following to be true :

\begin{conj} Let $Y$ be the skew-symmetric determinantal variety  $Y := \{w \in \bigwedge^2 \mathbb{C}^n, \, \text{such that} \, \, \mathrm{rk} w \leq 1 \}$. Then $Y$ admits a strongly crepant resolution if and only if $n$ is odd.
\end{conj}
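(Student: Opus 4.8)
The plan is to identify $Y$ with the affine cone over the Grassmannian $\mathrm{Gr}(2,n)$ in its Pl\"ucker embedding --- a skew-symmetric form of ``rank $\leq 1$'' is a decomposable $2$-vector $v_{1}\wedge v_{2}$, so $Y=\mathrm{Cone}\bigl(\mathrm{Gr}(2,n),\OO(1)\bigr)$, of dimension $2n-3$ --- and then to translate strong crepancy into the existence of a \emph{rectangular} Lefschetz decomposition of $\DB(\mathrm{Gr}(2,n))$ of length equal to the Fano index $n$. First I would record the elementary geometry. The Grassmannian $\mathrm{Gr}(2,n)$ is arithmetically Cohen--Macaulay, Fano of index $n$ (so $\w_{\mathrm{Gr}(2,n)}\simeq\OO(-n)$), with the cohomology of $\OO(m)$ concentrated in degree $0$ for $m\geq 0$; hence $Y$ is Gorenstein with rational singularities, and since $\mathrm{Pic}(\mathrm{Gr}(2,n))=\mathbb{Z}\cdot\OO(1)$ forces $\mathrm{Cl}(Y)=0$ one gets $\w_{Y}\simeq\OO_{Y}$. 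The tautological resolution $\pi:\widetilde{Y}=\mathrm{Tot}_{\mathrm{Gr}(2,n)}(\OO(-1))\to Y$ contracts the zero section $E\simeq\mathrm{Gr}(2,n)$, with $\w_{\widetilde{Y}/Y}\simeq\OO_{\widetilde{Y}}\bigl((n-1)E\bigr)$; thus $Y$ is terminal, has no geometric crepant resolution, and the discrepancy one must absorb is $n-1$.

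For the implication ``$n$ odd $\Rightarrow$ there is a strongly crepant resolution'', write $n=2k+1$. The essential input is a theorem of Fonarev: $\DB(\mathrm{Gr}(2,2k+1))$ carries a rectangular Lefschetz decomposition with respect to $\OO(1)$,
\begin{equation*}
\DB(\mathrm{Gr}(2,2k+1))=\bigl\langle\,\AA_{0},\ \AA_{0}(1),\ \dots,\ \AA_{0}(2k)\,\bigr\rangle,\qquad \AA_{0}=\bigl\langle\,\OO,\ \mathcal{U}^{\vee},\ \dots,\ S^{k-1}\mathcal{U}^{\vee}\,\bigr\rangle,
\end{equation*}
of length exactly $2k+1=n$ (here $\mathcal{U}$ is the tautological subbundle). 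This length is numerically attainable only for odd $n$, since the blocks are equivalent and $\operatorname{rk}K_{0}(\mathrm{Gr}(2,n))=\binom{n}{2}=k\cdot n$. Feeding this decomposition into Kuznetsov's construction of a categorical resolution of a cone from a Lefschetz decomposition of its base produces a category $\T$ (roughly, the subcategory of $\DU(\widetilde{Y})$ generated by $\LL\pi^{*}\DP(Y)$ together with the Lefschetz blocks supported on $E$, a categorical ``grade-restriction window''). It is a categorical resolution because $Y$ has rational singularities, and its relative Serre functor over $\DB(Y)$ --- computed from $\w_{\widetilde{Y}/Y}\simeq\OO_{\widetilde{Y}}((n-1)E)$ together with rectangularity and the fact that the length $n$ equals the discrepancy plus one --- is the identity, so $\T$ is strongly crepant. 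Van den Bergh's criterion then presents $\T$ as $\DB(Y,\AA)$ for a sheaf of $\OO_{Y}$-algebras $\AA$, matching the non-commutative phrasing used elsewhere in this note.

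For the converse, suppose $n$ is even and that $\T$ is a strongly crepant categorical resolution of $Y$. The plan is to ``deform $\T$ to the vertex'' along the contracting $\mathbb{G}_{m}$-action on the cone: by the window/VGIT formalism (Ballard--Favero--Katzarkov, Halpern--Leistner, \v{S}penko--Van den Bergh) a $\mathbb{G}_{m}$-equivariant categorical resolution of $Y$ restricts to, and is reconstructed from, a semiorthogonal decomposition of $\DB(\mathrm{Gr}(2,n))$; the requirement that the identity be a relative Serre functor for $\T^{c}$ over $\DB(Y)$, combined with $\w_{\mathrm{Gr}(2,n)}\simeq\OO(-n)$, forces this decomposition to be a \emph{rectangular} Lefschetz decomposition with respect to $\OO(1)$ of length exactly $n$. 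Since all $n$ blocks then have equal Hochschild homology, $n$ must divide $\dim_{\mathbb{C}}\HH_{\bullet}\bigl(\DB(\mathrm{Gr}(2,n))\bigr)=\sum_{p,q}h^{p,q}(\mathrm{Gr}(2,n))=\binom{n}{2}=\tfrac{n(n-1)}{2}$, i.e.\ $2\mid n-1$, contradicting the parity of $n$. As a heuristic cross-check one can instead compute the stringy invariant of $Y$ directly from the cone structure and the discrepancy $n-1$: with $q=uv$,
\begin{equation*}
E_{\mathrm{str}}(Y)=E\bigl(\mathrm{Gr}(2,n)\bigr)\cdot(q-1)\cdot\frac{q^{n}}{q^{n}-1}=q^{n}\cdot\frac{q^{n-1}-1}{q^{2}-1},
\end{equation*}
which is a polynomial if and only if $q^{2}-1\mid q^{n-1}-1$, i.e.\ if and only if $n$ is odd.

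The hard part is this converse, and inside it the step that manufactures, from an \emph{arbitrary} strongly crepant categorical resolution of the cone, a rectangular Lefschetz decomposition of $\DB(\mathrm{Gr}(2,n))$ of length $n$. One must first justify the reduction to the $\mathbb{G}_{m}$-equivariant case, and then show that strong crepancy rigidifies $\T$ enough to match the numerics of a length-$n$ rectangular collection --- ruling out \emph{all} categorical resolutions, not merely those of Lefschetz type. This is the delicate point that keeps the statement a conjecture; the other ingredients (the cone geometry, Fonarev's decomposition for odd $n$, Kuznetsov's cone--Lefschetz machine, Van den Bergh's algebra criterion, and the stringy computation) are essentially available off the shelf.
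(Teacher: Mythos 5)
The statement you are proving is stated in the paper as a \emph{conjecture} (attributed to the author and Kuznetsov), and the paper offers no proof: it only records that the ``if'' direction for odd $n$ is established in Kuznetsov's work, while for even $n$ only weakly crepant resolutions are known. So there is no argument in the paper to compare yours against. Your first half reproduces the known direction faithfully: identify $Y$ with the affine cone over $\mathrm{Gr}(2,n)$, note the discrepancy $n-1$ of the blow-up of the vertex, and feed the rectangular Lefschetz decomposition of $\DB(\mathrm{Gr}(2,n))$ of length $n$ (which exists precisely when $n$ is odd, and which is due to Kuznetsov in this case --- Fonarev's contribution concerns general $\mathrm{Gr}(k,n)$ and minimality) into Kuznetsov's cone construction. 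That part is correct and is exactly the route the literature takes.

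The genuine gap is the one you yourself flag: the ``only if'' direction. The step that converts an \emph{arbitrary} strongly crepant categorical resolution $\T$ of the cone into a rectangular Lefschetz decomposition of $\DB(\mathrm{Gr}(2,n))$ of length $n$ is not available. The window/VGIT results you invoke (Ballard--Favero--Katzarkov, Halpern-Leistner, \v{S}penko--Van den Bergh) produce semiorthogonal decompositions of categories of geometric or quotient-stack origin attached to a fixed GIT presentation; they say nothing about an abstract smooth cocomplete compactly generated category equipped only with adjoint functors to $\DU(Y)$ satisfying Kuznetsov's axioms, and there is no known mechanism forcing such a $\T$ to be $\mathbb{G}_m$-equivariant, to embed into $\DB(\widetilde{Y})$, or to be of Lefschetz type. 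Consequently the numerical obstruction $n \nmid \binom{n}{2}$ for $n$ even, and likewise the non-polynomiality of the stringy $E$-function, only exclude resolutions of one particular shape; they do not exclude all strongly crepant categorical resolutions. This is precisely why the statement remains a conjecture, so your text should be presented as a strategy for the open direction rather than as a proof.
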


It is proved in $\cite{kuz1}$ that for odd $n$, such cones indeed admit strongly crepant resolution. As for even $n$, only weakly crepant resolutions could be constructed.

\bigskip

In \cite{kuz1}, Kuznetsov proved the existence of categorical strongly crepant resolutions of singularities for scalar cyclic quotient singularities. Let me summarize how he found them, as I show in the last section of this paper that they are equivalent to the general construction I provide for quotient singularities. Recall that a (local) scalar cyclic quotient singularity is the data of a cyclic group $\G$ which acts by translation on $\mathbb{C}^n$. It is well-known that the quotient $\mathbb{C}^n / \G$ is Gorenstein if and only if the order of $\G$ (say $d$) divides $n$ and that $\mathbb{C}^n / \G$ admits a crepant resolution of singularities if and only if $d=n$ (see \cite{Fujiki}). In the following, I always assume that $d$ divides $n$.

Let me denote $X := \mathbb{C}^n / \G$ and $q : \X \rightarrow X$ the blow-up of $X$ at $0$. The morphism $q$ is a resolution of singularities and the exceptional divisor $i :E \hookrightarrow \X$ is isomorphic to $\mathbb{P}^{n-1}$ with $\OO_{E}(E) = \OO_{\mathbb{P}^{n-1}}(-d)$. Note that $\X$ is the total space of $ t : \OO_{\mathbb{P}^{n-1}}(-d) \rightarrow \mathbb{P}^{n-1}$.

\begin{theo}[\cite{kuz1}] \label{kuznetsov}
There is a semi-orthogonal decomposition:

\begin{equation*}
\DB(\X) = \langle i_* \left( \BB \otimes \OO_E((\frac{n}{d} -1)E) \right), \cdots,  i_* \left( \BB \otimes \OO_E(E) \right), \T_0 \rangle,
\end{equation*}
where $\BB = \langle \OO_{\mathbb{P}^{n-1}}((1-d)), \cdots, \OO_{\mathbb{P}^{n-1}}(-1), \OO_{\mathbb{P}^{n-1}} \rangle$.

The completion of $\T_0$ in $\DU(\X)$ is a categorical strongly crepant resolution of $X$ and there is an equivalence:

\begin{equation*}
\T_0 \simeq \DB(X, q_* \mathcal{E}nd(\AA)),
\end{equation*}
where $\AA = t^* \OO_{\mathbb{P}^{n-1}} \oplus t^*\OO_{\mathbb{P}^{n-1}}(-1) \oplus \cdots \oplus t^*\OO_{\mathbb{P}^{n-1}}(1-d)$.

\end{theo}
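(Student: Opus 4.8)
\begin{sketch}
The statement is due to Kuznetsov \cite{kuz1}; I would organise the argument into four steps, beginning with the geometry. Since $\G$ acts by scalars, $X = \mathrm{Spec}\, R$ with $R = \bigoplus_{k \geq 0} S_{kd}$ the $d$-th Veronese subring of $S = \mathbb{C}[x_1, \dots, x_n]$, and $\X$ is the total space $t : \X \to \mathbb{P}^{n-1}$ of $\OO(-d)$, with $E$ the zero section; thus $\OO_\X(-E) = t^*\OO(d)$, $\OO_E(jE) = \OO_{\mathbb{P}^{n-1}}(-jd)$, and, computing the relative dualizing sheaf of a line-bundle total space, $\omega_\X = t^*\OO(d-n) = \OO_\X\big((\tfrac nd - 1)E\big)$. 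As $d \mid n$ forces $\omega_X = \OO_X$, the discrepancy of $q$ along $E$ is $\tfrac nd - 1 \geq 0$, vanishing precisely when $d = n$.

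For the semi-orthogonal decomposition I would argue directly rather than quote the general machinery of \cite{kuz1}. Because $t^*\OO(d) = \OO_\X(-E)$ is $q$-ample and $X$ is affine, $t^*\OO(d)$ is ample on $\X$, and since any $n$ consecutive twists of $\OO_{\mathbb{P}^{n-1}}$ form a full (Beilinson) exceptional collection, this already shows that $t^*\OO(1-n), \dots, t^*\OO$ generate $\DB(\X)$. The structure sequences of $E$ twisted by $t^*\OO(\ell)$, namely $0 \to t^*\OO(\ell+d) \to t^*\OO(\ell) \to i_*\OO_{\mathbb{P}^{n-1}}(\ell) \to 0$, let one trade the generators $t^*\OO(\ell)$ with $1-n \leq \ell \leq -d$ for the $n-d$ sheaves $i_*\OO_{\mathbb{P}^{n-1}}(\ell)$, so that $\DB(\X)$ is generated by those sheaves together with $\T_0 := \langle t^*\OO(1-d), \dots, t^*\OO \rangle$. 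Finally the elementary vanishing $H^\bullet(\mathbb{P}^{n-1}, \OO(\ell)) = 0$ for $1-n \leq \ell \leq -1$ — fed through $\mathrm{Ext}^\bullet_\X(t^*\OO(a), i_*\OO_E(b)) = H^\bullet(\OO(b-a))$ and $\mathrm{Ext}^\bullet_\X(i_*\OO_E(a), i_*\OO_E(b)) = H^\bullet(\OO(b-a)) \oplus H^\bullet(\OO(b-a-d))[-1]$ (the latter using $\LL i^* i_* \OO_E = \OO_E \oplus \OO_E(d)[1]$) — shows that regrouping the $E$-sheaves into the $\tfrac nd - 1$ blocks $i_*(\BB \otimes \OO_E(jE))$, $j = \tfrac nd - 1, \dots, 1$, produces a genuine semi-orthogonal decomposition, which is exactly the one asserted. (Conceptually this is the rectangular-Lefschetz instance of Kuznetsov's blow-up construction, applied to the Lefschetz decomposition $\DB(\mathbb{P}^{n-1}) = \langle \BB, \BB(d), \dots, \BB(n-d) \rangle$ with respect to $\OO(d) = \OO_E(-E)$.)

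That the completion of $\T_0$ in $\DU(\X)$ is a categorical resolution of $X$ then follows from the proposition recalled above on categorical resolutions from admissible subcategories: $X$ has rational, indeed Gorenstein canonical, singularities, $\T_0 \subset \DB(\X)$ is admissible, and $\LL q^*\DP(X)$ (generated by $\OO_\X = t^*\OO$, hence contained in $\T_0$) embeds fully and faithfully because $\RR q_* \OO_\X = \OO_X$; the module structure of $\T_0$ over $X$ is the one induced by $\RR q_*$. For the algebra: the summands of $\AA = t^*\OO \oplus t^*\OO(-1) \oplus \cdots \oplus t^*\OO(1-d)$ are line bundles, so $\RR\mathcal{H}om_\X(\AA, \AA) = \mathcal{E}nd_\X(\AA)$ lies in degree $0$ and $\mathrm{Ext}^{>0}_\X(\AA, \AA) = 0$, while each $t^*\OO(\ell)$ with $|\ell| < d$ has vanishing higher direct images under $q$ (again by cohomology vanishing on $\mathbb{P}^{n-1}$), so $\RR q_* \mathcal{E}nd_\X(\AA) = q_* \mathcal{E}nd_\X(\AA)$ is an honest sheaf of algebras on $X$; since $\AA$ generates $\T_0$, the standard tilting argument then yields $\T_0 \simeq \DB(X, q_* \mathcal{E}nd_\X(\AA))$. (Unwinding, this algebra is $\mathrm{End}_R(S)$, i.e. the skew group algebra $S \ast \G$ — this is where one uses that $\G$ contains no pseudo-reflections — of global dimension $n$; this is what ties the statement to Theorem \ref{main1}.)

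Strong crepancy, the last step, requires the identity to be a relative Serre functor for $\T^c = \T_0$ with respect to $\DB(X)$. Through the equivalence $\T_0 \simeq \DB(X, \mathcal{A}')$, $\mathcal{A}' = q_* \mathcal{E}nd_\X(\AA)$, this becomes the assertion that $\mathcal{A}'$ is a \emph{symmetric} $\OO_X$-algebra, i.e. $\RR\mathcal{H}om_{\OO_X}(\mathcal{A}', \OO_X) \cong \mathcal{A}'$ as $\mathcal{A}'$-bimodules — with no homological shift, since $q$ is birational and $\omega_X = \OO_X$. I would deduce this from Grothendieck duality for the finite morphism $\mathrm{Spec}\, S \to \mathrm{Spec}\, R$: both rings being Gorenstein, $S$ is a maximal Cohen--Macaulay $R$-module with $\mathcal{H}om_R(S, R) \cong S$, whence $\mathrm{End}_R(S)$ is self-dual as an $R$-bimodule. (Alternatively, following \cite{kuz1} literally, one computes the relative Serre functor of $\T_0$ directly from the decomposition, as the composite of $- \otimes \omega_{\X/X} = - \otimes \OO_\X\big((\tfrac nd - 1)E\big)$ with left mutation back into $\T_0$, and uses that the Lefschetz decomposition is rectangular with the number of blocks equal to one plus the discrepancy to identify this composite with the identity.) I expect this last step — isolating the relative Serre functor of $\T_0$ and checking that it is trivial — to be the only genuine difficulty; the decomposition itself and the categorical-resolution property amount to bookkeeping with Beilinson collections, the structure sequences of $E$, and the cohomology of line bundles on $\mathbb{P}^{n-1}$.
\end{sketch}
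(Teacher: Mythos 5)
The first thing to say is that the paper contains no proof of this statement for you to be measured against: Theorem \ref{kuznetsov} is imported verbatim from \cite{kuz1}, and the surrounding text only summarizes Kuznetsov's construction before using it in the last section. Judged on its own terms, your sketch is a correct reconstruction of that construction, and every computation you invoke checks out: $\OO_{\X}(-E)=t^*\OO_{\mathbb{P}^{n-1}}(d)$, $\omega_{\X}=t^*\OO_{\mathbb{P}^{n-1}}(d-n)=\OO_{\X}((\tfrac{n}{d}-1)E)$, the trade of the generators $t^*\OO(\ell)$ for $1-n\le \ell\le -d$ against the $n-d$ sheaves $i_*\OO_E(\ell)$ via the twisted structure sequences of $E$, and the semiorthogonality of the asserted blocks, which reduces exactly as you say to $H^\bullet(\mathbb{P}^{n-1},\OO(m))=0$ for $1-n\le m\le -1$ (your formula for $\mathrm{Ext}^\bullet(i_*\OO_E(a),i_*\OO_E(b))$ via $\LL i^*i_*$ is right, and both summands land in that vanishing range for distinct blocks). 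Your route --- elementary Beilinson-style bookkeeping for the decomposition, then relative tilting, then symmetry of $\mathrm{End}_R(S)$ --- is more self-contained than Kuznetsov's derivation from his general Lefschetz-decomposition machinery, which you correctly identify as the conceptual source in your parentheticals. Two steps deserve more than the one line you give them. First, the passage from ``$\AA$ classically generates $\T_0$ and $\mathrm{Ext}^{>0}_{\X}(\AA,\AA)=0$'' to the equivalence $\T_0\simeq\DB(X,q_*\mathcal{E}nd(\AA))$ is not purely formal: one needs $\RR q_*\RR\mathcal{H}om(\AA,-)$ to be conservative on $\T_0$ and the algebra $\mathrm{End}_R(S)$ to have finite global dimension (so that perfect complexes over it exhaust its bounded derived category); the latter is precisely the noncommutative-resolution property of $S\ast\G$ and should be flagged as an input rather than an output. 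Second, for strong crepancy the isomorphism $\mathcal{H}om_R(\mathrm{End}_R(S),R)\cong\mathrm{End}_R(S)$ must be one of \emph{bimodules}, which requires tracking the two module structures through the duality $\mathcal{H}om_R(S,R)\cong S$ and the reflexivity of $S$ over $R$; this is the standard Van den Bergh argument, but it is the crux of the crepancy claim and cannot be left at ``whence''.
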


\end{section}

\begin{section}{Main results and proofs}
In this section, I will prove Theorems \ref{main1} and \ref{main2} and some related corollaries.

\begin{theo} 
Let $V$ be a smooth quasi-projective variety and let $\G$ be a finite subgroup of $\mathrm{Aut}(V)$. Assume that the dualizing sheaf of $V$ (denoted $\omega_V$) is $\G$-equivariantly locally trivial, then $\DU(Coh^{\G}(V))$ is a categorical strongly crepant resolution of $V/ \G$.

Furthermore, there exists a sheaf of algebras $\AA$ on $X/ \G$ such that $\DB(Coh^{\G}(V)) \simeq \DB(V/ \G, \AA)$. Hence, $\DU(Coh^{\G}(V))$ is a non-commutative strongly crepant resolution of $X/{\G}$ in the sense of Van den Bergh.

\end{theo}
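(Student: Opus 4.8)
The strategy is to realize $\DU(Coh^{\G}(V))$ as the derived category of a sheaf of algebras on $V/\G$, and then to verify the crepancy conditions directly using equivariant Serre duality. Let $p : V \to V/\G$ be the quotient map. First I would treat the affine case: if $V = \operatorname{Spec} S$ with $\G$ acting on $S$, then $Coh^{\G}(V)$ is the category of finitely generated $S \rtimes \G$-modules, i.e. modules over the skew group algebra. Since $\G$ is finite and we work over $\mathbb{C}$ (so $|\G|$ is invertible), $S \rtimes \G$ is a finitely generated module over its center, which contains $R := S^{\G} = \OO(V/\G)$; in fact $S\rtimes\G$ is an Azumaya-like $R$-algebra away from the branch locus but in general just a finite $R$-algebra that is Cohen-Macaulay over $R$. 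Globalizing, one sets $\AA := p_*(\OO_V) \rtimes \G$, a sheaf of $\OO_{V/\G}$-algebras, and the standard equivalence $Coh^{\G}(V) \simeq Coh(V/\G, \AA)$ (descent for the affine morphism $p$, equipped with $\G$-action) gives $\DB(Coh^{\G}(V)) \simeq \DB(V/\G,\AA)$ and likewise on the unbounded level. This handles the final ``furthermore'' sentence and identifies the module structure over $V/\G$.

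Next I would set up the adjoint pair. The functor $\pi_* : \DU(Coh^{\G}(V)) \to \DU(V/\G)$ is $\RR p_*^{\G} = (\RR p_* (-))^{\G}$, the equivariant pushforward composed with taking invariants (equivalently, $\RR\Hh om_{\AA}(p_*\OO_V, -)$ or tensoring down the bimodule); its left adjoint $\pi^*$ is $p^*(-)$ with its canonical equivariant structure, i.e. $- \otimes_{\OO_{V/\G}} \AA$ read appropriately. Because $p$ is finite and flat onto its image in the relevant sense (more precisely because $|\G|$ is invertible, so taking $\G$-invariants is exact and splits off as a direct summand of the regular representation), one gets $\pi_* \pi^* \simeq \operatorname{id}$ on the nose: $p_* p^* \MM = \MM \otimes p_*\OO_V$ and the $\G$-invariant part of $p_*\OO_V$ is $\OO_{V/\G}$. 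Compatibility with arbitrary direct sums is automatic for $p^*$ and follows for $\pi_*$ since it is right adjoint and $Coh^{\G}(V)$ (equivalently $\DU(V/\G,\AA)$) is compactly generated; properness of $p$ gives $\pi_*(\T^c) \subset \DB(V/\G)$. Smoothness and cocompleteness of $\DU(Coh^{\G}(V))$ follow because $V$ is smooth and $\G$ finite: $\AA$ has finite global dimension (equal to $\dim V$), being a finite extension of the smooth $\OO_{V/\G}$ by an étale-locally split construction, so this is a smooth, compactly generated, cocomplete category. Thus it is a categorical resolution.

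For crepancy, the key input is the hypothesis that $\w_V$ is $\G$-equivariantly locally trivial. I would show the relative Serre functor of $\DB(Coh^{\G}(V))$ over $\DB(V/\G)$ is the identity. Relative Serre duality for the finite morphism $p$ reads $\RR p_*\RR\Hh om_V(\FF,\GG \otimes \w_{V/(V/\G)}[\dim]) \cong \RR\Hh om_{V/\G}(\RR p_*\FF, \RR p_*\GG)$ with the relative dualizing complex $\w_{p} = \w_V \otimes p^*\w_{V/\G}^{-1}$ (a line bundle since both varieties are Gorenstein — $V$ smooth and $V/\G$ Gorenstein because $\w_V$ descends). The equivariant structure on $\w_V$ being locally trivial is exactly the statement that $\w_p$ is equivariantly trivial as a $\G$-sheaf, i.e. $\w_p \cong p^*(\text{line bundle on } V/\G)$ equivariantly, and comparing with $\w_V = p^*\w_{V/\G}$ (which the descent of $\w_V$ gives) forces $\w_p \cong \OO_V$ equivariantly. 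Plugging $\w_p \cong \OO_V$ into relative Serre duality shows the relative Serre functor on $\DB(Coh^{\G}(V))^c$ over $\DB(V/\G)$ is the identity, which is precisely strong crepancy; weak crepancy (right adjointness of $\pi^*$ to $\pi_*$ on $\DP$) then follows, or can be checked directly since $\pi^! = \pi^* \otimes \w_p[\dim - \dim] = \pi^*$.

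The main obstacle is the precise bookkeeping of the equivariant structures in the last paragraph: one must be careful that ``dualizing sheaf $\G$-equivariantly locally trivial'' translates correctly into ``relative dualizing complex is the $\G$-equivariant structure sheaf,'' and that relative Serre duality for a finite (non-flat, ramified) morphism $p$ in the equivariant setting is applied with the correct shift and the correct identification $\w_p = p^!\OO_{V/\G}$. Since $|\G|^{-1} \in \mathbb{C}$, the invariants functor is a direct summand of $p_*$, which makes all the adjunctions split and removes convergence worries, so that part is routine; the real content is the duality identity together with the equivariant triviality of $\w_p$.
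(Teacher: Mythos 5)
Your proposal is correct, and for the ``categorical strongly crepant resolution'' half it follows essentially the same path as the paper: the adjoint pair is $\pi_*^{\G}=(\text{invariants})\circ p_*$ with left adjoint $\LL p^*$, the identity $\pi_*^{\G}\LL p^*\simeq id$ comes from the projection formula and $(p_*\OO_V)^{\G}=\OO_{V/\G}$, and crepancy is extracted from Grothendieck duality for the finite map $p$ together with the observation that the equivariant local triviality of $\omega_V$ forces $\omega_V\cong p^*\omega_{V/\G}$ equivariantly, hence $p^!\OO_{V/\G}\cong\OO_V$ equivariantly and the relative Serre functor is the identity (the paper runs this as two separate computations, weak crepancy first and then the relative Serre functor via $\RR\H om(-,\OO_V)$, but the duality input is identical to yours). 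Where you genuinely diverge is the ``furthermore'' clause: you take $\AA$ to be the sheaf of skew group algebras $p_*\OO_V\rtimes\G$ and invoke the standard equivalence $Coh^{\G}(V)\simeq Coh(V/\G,\AA)$ for the affine $\G$-morphism $p$, whereas the paper exhibits the explicit vector bundle $\GG=\bigoplus_i\OO_V\otimes V_{\rho_i}$ (sum over the irreducible representations), proves it is tilting for $\pi_*^{\G}$ by an exactness-plus-base-change-plus-Schur argument, and sets $\AA=\pi_*^{\G}\mathcal{E}nd(\GG)$, citing Hille--Van den Bergh for the resulting equivalence. The two algebras are Morita equivalent and the paper itself remarks that its construction recovers the skew algebra $\mathrm{Sym}\,V\#\G$ in the linear case; your route is shorter and makes the smoothness of $\DU(Coh^{\G}(V))$ transparent (finite global dimension of the skew algebra since $|\G|$ is invertible and $V$ is regular), while the paper's tilting-bundle argument has the advantage of generalizing verbatim to the stacky setting of Theorem \ref{main2}, where the generating bundle is supplied by Kresch's theorems rather than by the regular representation. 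The only point I would press you on is to make explicit that the compatibility $(\pi_*\omega_V)^{\G}\cong\omega_{V/\G}$ (not merely some line bundle $L$ on $V/\G$) is what descent for finite quotient maps provides, as the paper does by citing the relevant reference; with that stated, your argument is complete.
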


The dualizing line bundle of $V$ being $\G$-equivariantly trivial means that for each $v \in V$, one can find a $\G$-stable open neighborhood of $v \in V$ such that $\omega_V$ restricted to that open is $\G$-equivariantly trivial. This implies in particular that if $v \in V$ is a fixed point of the action on $\G$ on $V$, then the representation of $\G_v$ on $\omega_{V,v}$ is trivial. Hence, by descent theory for quotient maps by finite groups, we find that $V/ \G$ is Gorenstein and that $\omega_V$ is the pull-back of $\omega_{V/ \G}$ (see \cite{peskin}).

\begin{proof} The category $\DU(Coh^{\G}(V))$ is obviously cocomplete. Since $V$ is quasi-projective, it admits a $\G$-equivariant ample line bundle, say $L$. The same assertions as in \cite{Neeman}, example $1.10$, carry over to show that if $V_{\rho_1}, \cdots, V_{\rho_p}$ are the finite irreducible representations of $\G$, then the family:

\begin{equation*}
 \{ L^{\otimes m}[n] \otimes V_{\rho_l} \}^{l=1 \cdots p}_{n, m \in \mathbb{Z}}
\end{equation*}
 generates $\DU(Coh^{\G}(V))$. Finally, since $V$ is smooth and $\DU(Coh^{\G}(V))$ is compactly generated, the same argument as in (\cite{TV}, Lemma $3.27$) show that $\DU(Coh^{\G}(V))$ is smooth.

\bigskip

Denote by $\pi : V \rightarrow V/G$ the quotient map. There is a functor:

\begin{equation*}
\pi_*^{\G} : \DU(Coh^{\G}(V)) \rightarrow \DU(V/ \G),
\end{equation*}
where $\FF \rightarrow \FF^{\G}$ is the functor of $\G$-invariants. The left adjoint to $\pi_*^{\G}$ is $\LL \pi^*$, where the $\G$ action on $\LL \pi^* \FF$ for any $\FF$ in $\DU(V/ \G)$ is the trivial one.

 By construction of the quotient $V/ \G$, we have $\pi_*^{\G} \OO_V = \OO_{V/ \G}$. The projection formula implies that:

\begin{equation*}
\pi_*^{\G} \LL \pi^* \simeq id.
\end{equation*}
The functor $\LL \pi^*$ always commutes with arbitrary direct sums, whereas $\pi_*^{\G}$ commutes with arbitrary direct sums because it is right adjoint to $\LL \pi^*$ ans $\DU(Coh^{\G}(V))$ is compactly generated. The inclusion $\pi_*^{\G} \DB(Coh^{\G}(V)) \subset \DB(V/ \G)$ comes from the properness of $\pi$.

\bigskip

Let me now study the right adjoint to $\LL \pi^*$. The dualizing line bundle of $V$ is $\G$-equivariantly locally trivial, hence $\omega_{V} = \pi^* \omega_{V/ \G}$. Thus, by Grothendieck duality, we have:

\begin{equation*}
\Hh_{\DB(V/ \G)}( \pi_* \GG, \FF) = \Hh_{\DB(V)}( \GG, \LL \pi^* \FF),
\end{equation*} 
for all $\GG \in \DB(V)$ and all $\FF \in \DP(V/ \G)$. As a consequence, for any $\GG \in \DB(V)$, we have:

\begin{equation*}
\begin{split}
\pi_* \RR \H om(\GG, \OO_V) & = \pi_* \RR \H om(\GG, \LL \pi^* \OO_{V/ \G})\\
                                                & = \RR \H om( \pi_* \GG, \OO_{V/ \G}).\\
\end{split}
\end{equation*}
If $\GG$ is assumed to be $\G$-equivariant, then:
\begin{equation*}
\RR \H om(\pi_* \GG, \OO_{V / \G})^{\G} = \RR \H om(\pi_*^{\G} \GG, \OO_{V / \G}).
\end{equation*}

From the above equalities, we find:
\begin{equation*}
\begin{split}
\Hh_{\DB(V / \G)}( \pi_*^{\G} \GG, \FF) & =  \Hh_{\DB(V / \G)}( \OO_{V / \G}, \RR \H om (\pi_*^{\G} \GG, \OO_{V/ \G}) \otimes\FF)\\
                                                                  & = \Hh_{\DB(V / \G)}( \OO_{V / \G}, \pi_*^{\G}\RR \H om (\GG, \OO_{V}) \otimes\FF) \\
                                                                  & = \Hh_{\DB(V / \G)}( \OO_{V / \G}, \pi_*^{\G} \left(\RR \H om (\GG, \OO_{V}) \otimes\LL \pi^*\FF \right)) \\
                                                                   & =  \Hh_{\DB(Coh^{\G}(V)}( \OO_{V}, \RR \H om (\GG, \OO_{V}) \otimes\LL \pi^*\FF )\\
                                                                   & = \Hh_{\DB(Coh^{\G}(V))}( \GG, \LL \pi^* \FF)
\end{split}
\end{equation*} 
for all $\GG \in \DB(Coh^{\G}(V))$ and all $\FF \in \DP(V/ \G)$. Hence, $\LL \pi^*$ is a right adjoint to $\pi_*^{\G}$, well-defined on $\DP(V/ \G)$. This already proves that $ \pi_*^{\G}: \DU(Coh^{\G}(V)) \rightarrow \DU(V / \G)$ is a categorical weakly crepant resolution of singularities of $V / \G$. I go on demonstrating that it is in fact a strongly crepant resolution of singularities.

\bigskip

The category $\DU(Coh^{\G}(V))$ has obviously a $V / \G$ module structure as it is naturally endowed with a tensor product. So I am only left to prove that the identity is a relative Serre functor for $\DB(Coh^{\G}(V))$ with respect to $\DB(V / \G)$. Let $\GG_1$ and $\GG_2$ be in $\DB(Coh^{\G}(V))$, we have:

\begin{equation*}
\begin{split}
\RR \pi_*^{\G} \RR \H om(\GG_1, \GG_2) & = \RR \pi_*^{\G} \RR \H om(\GG_1 \otimes \RR \H om(\GG_2, \OO_V ), \OO_V)\\
                                                            & =  \RR \pi_*^{\G} \RR \H om(\GG_1 \otimes \RR \H om(\GG_2, \OO_V ), \pi^{!} \OO_{V/ \G})\\
                                                            & =  \RR \H om(\RR \pi_*^{\G}(\GG_1 \otimes \RR \H om(\GG_2, \OO_V )), \OO_{V/ \G})\\
                                                            & = \RR \H om(\RR \pi_*^{\G}(\RR \H om(\GG_2, \GG_1 )), \OO_{V/ \G}).
\end{split}
\end{equation*}
This shows that the identity is indeed a relative Serre functor  for $\DB(Coh^{\G}(V))$ with respect to $\DB(V / \G)$.

\bigskip

I want to prove that to prove that  $\DU(Coh^{\G}(V))$ is non-commutative in the sense of Van den Bergh, i.e. there exists a sheaf of algebras $\AA$ on $V/ \G$ such that:
\begin{equation*}
 \DB(Coh^{\G}(V)) \simeq \DB(V/G, \AA).
\end{equation*}
 Let $V_{\rho_1}, \cdots, V_{\rho_m}$ be all the finite irreducible representations of $\G$. Let's prove that:
\begin{equation*}
 \GG := \OO_V \otimes V_{\rho_1} \oplus \cdots \oplus \OO_V \otimes V_{\rho_m}
\end{equation*}
 is a tilting bundle for $\DB(Coh^{\G}(V))$ with respect to $\pi_*^{\G}$. Notice that the functor $\pi_*^{\G}$ is exact, so that I only have to check that for any equivariant coherent sheaf $\FF$, the vanishing $\pi_*^{\G} \H om(\GG, \FF) = 0$ implies $\FF =0$.

Let $\FF$ be $\G$-equivariant sheaf such that  $\pi_*^{\G} \H om(\GG, \FF) = 0$. The morphism $\pi$ is flat, the functor $\pi_*$ is exact and $\GG$ is locally free, hence by base change, this is equivalent to:

\begin{equation*}
 \H om(\GG, \FF \otimes \OO_{\pi^{-1}(y)})^{\G} = 0,
\end{equation*}
for all $y \in V/ \G$. We have a $\G$-equivariant surjection $\GG \rightarrow \GG \otimes \OO_{\pi^{-1}(y)}$, so that the above vanishing implies:

\begin{equation*}
 \H om(\GG \otimes \OO_{\pi^{-1}(y)} , \FF \otimes \OO_{\pi^{-1}(y)})^{\G} = 0.
\end{equation*}
But $\OO_{\pi^{-1}(y)}$ is a finite $\mathbb{C}$-algebra and $\FF$ is a $\G$-equivariant coherent sheaf. Hence, by Schur's lemma, we find $\FF \otimes \OO_{\pi^{-1}(y)} = 0$ for all $y \in V/ \G$, that is $\FF = 0$. I thus proved that $\GG$ is a tilting bundle for $\DB(Coh^{\G}(V))$ with respect to $\pi_*^{\G}$. We deduce that (see \cite{HVDB}):

\begin{equation*}
\DB(Coh^{\G}(V)) \simeq \DB(V / \G, \pi_*^{\G} \mathcal{E}nd(\GG)).
\end{equation*}

\end{proof}

\begin{cor}
Let $V$ be a finite dimensional vector space and let $\G$ be a finite subgroup of $\mathrm{SL}(V)$. Then $V/ \G$ admits a non-commutative strongly crepant resolution in the sense of Van den Bergh.
\end{cor}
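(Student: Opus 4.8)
The plan is to derive this corollary directly from Theorem \ref{main1}, the only point requiring verification being that its hypothesis on the dualizing sheaf holds when $\G \subset \mathrm{SL}(V)$. First I would observe that a finite-dimensional $\mathbb{C}$-vector space $V$ is a smooth quasi-projective (in fact affine) variety, and that $\mathrm{SL}(V) \subset \mathrm{GL}(V) \subset \mathrm{Aut}(V)$, so $\G$ is indeed a finite subgroup of $\mathrm{Aut}(V)$, acting linearly on $V$.

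Next I would check that $\omega_V$ is $\G$-equivariantly trivial (globally, hence a fortiori locally). Since $\G$ acts linearly, the induced action on $\Omega^1_V \cong \OO_V \otimes V^\vee$ is, on the constant part $V^\vee$, the dual representation; taking top exterior powers gives $\omega_V \cong \OO_V \otimes \bigwedge^{\dim V} V^\vee$, and the $\G$-action on the one-dimensional space $\bigwedge^{\dim V} V^\vee$ is $g \mapsto \det(g)^{-1}$, which is trivial precisely because $g \in \mathrm{SL}(V)$. Hence the $\G$-linearization of $\omega_V$ is the trivial one, so $\omega_V$ is $\G$-equivariantly trivial; in particular $V/\G$ is normal and Gorenstein with $\omega_V = \pi^* \omega_{V/\G}$, as noted above.

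It then remains only to invoke Theorem \ref{main1}: it yields that $\DU(Coh^{\G}(V))$ is a categorical strongly crepant resolution of $V/\G$ and produces a sheaf of algebras $\AA$ on $V/\G$ with $\DB(Coh^{\G}(V)) \simeq \DB(V/\G, \AA)$, which is exactly the assertion that $V/\G$ admits a non-commutative strongly crepant resolution in the sense of Van den Bergh. I do not expect any real obstacle here: the corollary is a formal specialization of Theorem \ref{main1}, the only genuine input being the elementary determinant computation above. If one wants $\AA$ explicitly, one can take $\AA = \pi_*^{\G} \mathcal{E}nd(\GG)$ with $\GG = \bigoplus_l \OO_V \otimes V_{\rho_l}$ the tilting bundle from the proof of Theorem \ref{main1}; this recovers the classical skew group algebra $\OO_V \rtimes \G$.
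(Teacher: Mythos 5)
Your proposal is correct and follows exactly the paper's argument: the whole content of the corollary is that $\G \subset \mathrm{SL}(V)$ acts trivially on the volume form, so $\omega_V$ is $\G$-equivariantly trivial and Theorem \ref{main1} applies; your determinant computation just spells this out. Your closing remark identifying $\AA$ with the skew group algebra also matches the paper's comment following the corollary.
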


This corollary already appeared many times in the literature (see \cite{vdb2}). Note that our construction precisely gives the expected algebra for the non-commutative resolution of $V / \G$, that is the skew-algebra $\mathrm{Sym} V \# G$.

\begin{proof}
Indeed, if $\G$ is a subgroup of $\mathrm{SL}(V)$, then $\G$ acts trivially on the volume form of $V$. This implies that $\omega_V$ is $\G$-equivariantly trivial, so that we can apply the above result.
\end{proof}

\begin{cor}
Let $X$ be a smooth projective holomorphically symplectic variety and let $\G$ be a finite subgroup of $\mathrm{Aut}(X)$. Assume that the symplectic form of $X$ is $\G$-equivariant, then $\DB(Coh^{\G}(X))$ is a holomorphically symplectic category.
\end{cor} 

This last corollary will proved and discussed with great care in the forthcoming \cite{abuaf-HKC}. I now deal with my second main result:

\begin{theo}
Let $X$ be a quasi-projective variety with normal Gorenstein quotient singularities and let $\XX$ be the smooth separated Deligne-Mumford stack whose coarse moduli space is $X$. Assume that the dualizing line bundle of $\XX$ is the pull back of the dualizing line bundle on $X$, then $\DU(\XX)$ is a strongly crepant resolution of $X$.

Furthermore,  there exists a sheaf of algebras $\AA$ on $X$ such that $\DB(\XX) \simeq \DB(X, \AA)$. Hence, $\DU(\XX)$ is a non-commutative strongly crepant resolution of $X$ in the sense of Van den Bergh.

\end{theo}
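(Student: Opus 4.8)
The plan is to mimic the proof of Theorem \ref{main1} almost verbatim, substituting the étale atlas of the Deligne-Mumford stack $\XX$ for the equivariant category $Coh^{\G}(V)$, and using the coarse moduli map $\pi : \XX \rightarrow X$ in place of the quotient map. First I would recall that since $\XX$ is a smooth separated Deligne-Mumford stack of finite type with a quasi-projective coarse moduli space, $\DU(\XX)$ is cocomplete, compactly generated, and smooth: compact generation follows because $X$ quasi-projective lets us pull back an ample bundle from $X$ and twist by a generating set of the (finitely many, up to the relevant stratification) isotropy representations, exactly as in the equivariant case; smoothness follows from the argument of \cite{TV}, Lemma $3.27$, which only uses that the stack is smooth of finite type and $\DU$ is compactly generated. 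The functor $\pi_* : \DU(\XX) \rightarrow \DU(X)$ is exact on coherent sheaves (this is the key local feature of coarse moduli maps for tame/Deligne-Mumford stacks in characteristic zero — étale-locally it is taking $\G$-invariants for a finite group $\G$), with left adjoint $\LL\pi^*$, and $\pi_* \OO_{\XX} = \OO_X$ since $X$ is normal. The projection formula then gives $\pi_* \LL \pi^* \simeq id$; commutation with direct sums for $\LL\pi^*$ is automatic and for $\pi_*$ follows from compact generation plus adjunction; and $\pi_*(\DB(\XX)) \subset \DB(X)$ follows from properness of $\pi$. This establishes that $\DU(\XX)$ is a categorical resolution.

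Next I would prove weak, then strong, crepancy. The hypothesis that $\w_{\XX} = \LL\pi^* \w_X$ (equivalently $\pi^{!}\OO_X \simeq \OO_{\XX}$, using that $\pi$ is a proper morphism of finite Tor-dimension between Gorenstein-ish objects and that the relative dualizing complex of a coarse moduli map is the relative canonical line bundle in degree zero) lets me run the Grothendieck-duality computation of Theorem \ref{main1} line by line: for $\GG \in \DB(\XX)$ and $\FF \in \DP(X)$ one gets $\pi_* \RR\H om(\GG,\OO_{\XX}) = \RR\H om(\pi_*\GG, \OO_X)$ and hence $\Hh_{\DB(X)}(\pi_*\GG, \FF) = \Hh_{\DB(\XX)}(\GG, \LL\pi^*\FF)$, so $\LL\pi^*$ is right adjoint to $\pi_*$ on $\DP(X)$, i.e. the resolution is weakly crepant. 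For strong crepancy, the module structure over $X$ is the obvious one from tensor product on $\XX$, and the identity being a relative Serre functor is the same four-line manipulation:
\begin{equation*}
\RR\pi_*\RR\H om(\GG_1,\GG_2) = \RR\pi_*\RR\H om(\GG_1 \otimes \RR\H om(\GG_2,\OO_{\XX}), \pi^{!}\OO_X) = \RR\H om(\RR\pi_*\RR\H om(\GG_2,\GG_1), \OO_X),
\end{equation*}
using $\pi^{!}\OO_X \simeq \OO_{\XX}$ and Grothendieck duality for $\pi$.

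Finally, for the non-commutative statement I would construct a tilting bundle. Since $X$ is quasi-projective with quotient singularities, $\XX$ is a quotient stack étale-locally and there is a vector bundle $\GG$ on $\XX$ whose summands hit every isotropy representation appearing (one can take a direct sum of the "obvious" generating bundles, or invoke that a smooth separated tame DM stack with quasi-projective coarse space has a generating sheaf — a vector bundle $\GG$ such that $\pi^*\pi_*\RR\H om(\GG,-) \rightarrow -$ is surjective on sheaves). Because $\pi_*$ is exact and $\GG$ is locally free, tilting reduces to checking $\pi_*\H om(\GG,\FF) = 0 \Rightarrow \FF = 0$ for coherent $\FF$; by flat base change to the (finite) fibers of $\pi$ this becomes $\H om(\GG \otimes \OO_{\pi^{-1}(y)}, \FF\otimes\OO_{\pi^{-1}(y)})^{\text{inv}} = 0$ for all $y$, and Schur's lemma for the local finite group action forces $\FF\otimes\OO_{\pi^{-1}(y)} = 0$, hence $\FF = 0$ — exactly as in Theorem \ref{main1}. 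Then $\DB(\XX) \simeq \DB(X, \pi_*\mathcal{E}nd(\GG))$ by the tilting formalism of \cite{HVDB}, with $\AA := \pi_*\mathcal{E}nd(\GG)$, completing the proof.

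The main obstacle, and the reason this is not literally a restatement of Theorem \ref{main1}, is establishing the three structural facts for stacks that were transparent for global quotients: (i) exactness of the pushforward $\pi_*$ along the coarse moduli map — this is where one genuinely needs the tameness of the DM stack (characteristic zero suffices) and a careful local analysis, since without it $\pi_*$ need not even be exact and the whole argument collapses; (ii) the identification of the relative dualizing complex $\pi^{!}\OO_X$ with $\w_{\XX/X}$ concentrated in degree zero, which is what makes the hypothesis on dualizing bundles usable; and (iii) the existence of a global locally free generator $\GG$ on $\XX$, which for a general DM stack requires the quasi-projectivity of $X$ and the quotient-singularity hypothesis (so that $\XX$ is a global or at least étale-locally-uniform quotient). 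These are precisely the "delicate results about Deligne-Mumford stacks" alluded to in the introduction; once they are in hand, every remaining step is a formal transcription of the proof of Theorem \ref{main1}.
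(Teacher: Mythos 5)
Your proposal is correct and follows essentially the same route as the paper: compact generation and smoothness of $\DU(\XX)$, exactness of $\pi_*$ for the coarse moduli map, Grothendieck duality for Deligne--Mumford stacks to identify the right adjoint of $\pi_*$ on $\DP(X)$ (giving weak and then strong crepancy via the tensor-product computation of Theorem \ref{main1}), and a generating vector bundle on $\XX$ for the tilting statement. The only difference is one of emphasis: where you re-run the fiberwise Schur-lemma argument to verify the tilting property, the paper simply cites Kresch (Theorems $4.4$ and $5.3$) for the existence of the required generating bundle, and it cites To\"en for compact generation rather than sketching the ample-bundle-plus-isotropy-representations argument.
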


Note that if $X$ is a normal quasi-projective variety with quotient singularities, then there is always a smooth separated Deligne-Mumford stack associated to it as in the above statement (see proposition $2.8$ of \cite{vistoli}). The non-trivial hypothesis (which can not be removed) is that the dualizing bundle of the Deligne-Mumford stack associated to $X$ is the pull back of the dualizing bundle on $X$. This amounts to check that on an \'etale atlas of $\XX$, the line bundle $\omega_{\XX}$ is equivariantly \footnote{for the isotropy groups of the fixed points of the \'etale atlas of $\XX$} locally trivial. This condition is probably not obvious to check in general, but  I believe that a very precise description of the singularities of $X$ might help one to decide whether it is satisfied or not.

\begin{proof}
This proof follows exactly the same pattern as the proof of Theorem \ref{main1}. However, in the present case, I have to use some delicate results about Deligne-Mumford stacks in order to have the same machinery work.

\bigskip

The category $\DB(\XX)$ is certainly cocomplete. By \cite{Toen}, corollary $5.2$, the category $\DU(\XX)$ is also compactly generated. Since $\XX$ is a smooth and separated Deligne-Mumford stack and $\DU(\XX)$ is compactly generated, a small variation on lemma $3.27$ in \cite{TV} shows that $\DU(\XX)$ is smooth.

Let $\pi : \XX \rightarrow X$ be the projection from $\XX$ to its coarse moduli space. By construction of the coarse moduli space of a Deligne-Mumford stack, we know that $\pi_*$ is exact and that $\pi_* \OO_{\XX} = \OO_X$. Hence, by the projection formula, we have:

\begin{equation*}
\pi_* \LL \pi^* \simeq id,
\end{equation*}

where $\LL \pi^*$ is the left adjoint to $\pi_*$. The functor $\LL \pi^*$ always commutes with arbitrary direct sums, whereas $\pi_*$ commutes with arbitrary direct sums because it is right-adjoint to $\LL \pi^*$ and $\DB(\XX)$ is compactly generated. Finally, the morphism $\pi$ is proper, so that $\pi_* \DB(\XX) \subset \DB(X)$.

\bigskip

 Let me show that the relative Serre functor of $\DB(\XX)$ with respect to $\DB(X)$ is trivial. By Grothendieck duality for Deligne-Mumford stack (see \cite{nironi}, Theorem $2.27$ or \cite{yeku}), the right adjoint to $\pi_*$ is equal to $\LL \pi^* \otimes \omega_{\XX} \otimes \pi^* \omega_X^{-1}$ when restricted to $\DP(X)$. This already proves that $\DU(\XX)$ is a categorical weakly crepant resolution of $X$. But the category $\DU(\XX)$ is endowed with a tensor product, so that the same basic computations as in the proof of Theorem \ref{main1} show that the relative Serre functor $\DB(\XX)$ with respect to $\DB(X)$ is indeed trivial.

\bigskip

I am left to prove that the exists a tilting vector bundle for $\DB(\XX)$ with respect to $\pi$. But $\pi_*$ is exact, so I only have to prove the existence of a vector bundle $\GG$ on $\XX$ such that for any coherent sheaf $\FF$ on $\XX$, the vanishing $\pi_* \H om(\GG, \FF) = 0$ implies $\FF =0$. The existence of such a vector bundle is exactly the content Theorem $4.4$ and Theorem $5.3$ of \cite{kresch}.

\end{proof}

\end{section}

\begin{section}{Some connections with the McKay correspondence}
I have proved that for any smooth quasi-projective variety $V$ acted on by a finite group $\G$ such that $\omega_{V}$ is $\G$-equivariantly locally trivial, the derived category $\DU(Coh^{\G}(V))$ is a strongly crepant resolution of $V/ \G$. In this situation, Conjecture $4.10$ of \cite{kuz1} can be restated as follows:

\begin{conj} \label{conjconj}
Let $V$ be a smooth quasi-projective variety and let $\G$ be a finite subgroup of $\mathrm{Aut}(V)$ such that $\omega_V$ is $\G$-equivariantly locally trivial. Then, for any resolution of singularities $Z \rightarrow V/ \G$, there exists an admissible fully faithful embedding:

\begin{equation*}
\DB(Coh^{\G}(V)) \hookrightarrow \DB(Z).
\end{equation*}
In particular, if $Z$ is a crepant resolution of $V/ \G$, then there is an equivalence:

\begin{equation*}
\DB(Coh^{\G}(V)) \simeq \DB(Z).
\end{equation*}
\end{conj}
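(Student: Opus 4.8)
The plan is to produce the embedding as a Fourier--Mukai functor and then to promote it to an equivalence in the crepant case by comparing relative Serre functors, the guiding principle being that the strongly crepant resolution $\DB(Coh^{\G}(V))$ is minimal among all categorical resolutions of $X := V/\G$. Under the standing hypothesis the quotient stack $\XX := [V/\G]$ is a smooth separated Deligne--Mumford stack with coarse space $X$ and with $\w_{\XX} = \pi^* \w_X$, so that $\DB(Coh^{\G}(V)) \simeq \DB(\XX)$ is exactly the resolution furnished by Theorem \ref{main2}. Since quotient singularities are rational, the given resolution $q : Z \rightarrow X$ also yields, via the Proposition of Section~2, a categorical resolution $\T_Z \hookrightarrow \DU(Z)$ containing $\LL q^* \DP(X)$; the heart of the matter is to compare these two resolutions of the same base.

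First I would form the fibre product $W := \XX \times_X Z$ with projections $p : W \rightarrow \XX$ and $r : W \rightarrow Z$, and single out the reduced irreducible component $W_0 \subset W$ dominating both factors --- the stacky analogue of the incidence correspondence underlying the McKay correspondence. With $\PP := \OO_{W_0}$ (possibly twisted so as to be crepant relative to both projections), set $\Phi := \RR r_* \left( \LL p^* (-) \ot \PP \right) : \DB(\XX) \rightarrow \DB(Z)$. Over the open locus where $q$ is an isomorphism, $W_0$ restricts to the graph of the birational map $\XX \dashrightarrow Z$ and $\Phi$ is visibly an equivalence, so all the content is concentrated over the exceptional locus of $q$.

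To prove that $\Phi$ is fully faithful with admissible image I would apply a Bondal--Orlov type spanning-class criterion: it suffices to check that $\Phi$ carries the skyscraper-type generators of $\DB(\XX)$ (the simple equivariant sheaves $\OO_v \ot V_{\rho_l}$ supported at points $v \in V$) to an $\mathrm{Ext}$-orthogonal family of the correct cohomological dimension. The classical way to verify this is the Bridgeland--King--Reid intersection theorem, but that input requires the fibre-dimension bound $\dim (W_0 \times_X W_0) \le \dim X + 1$, which fails as soon as $\dim X \ge 4$ --- precisely the obstruction flagged after the last theorem of the Introduction. \textbf{This is the main obstacle:} one must replace the BKR dimension estimate by a dimension-free verification of full faithfulness.

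The route I would pursue to circumvent it is Kawamata's inductive strategy: factor both $\XX \rightarrow X$ and $Z \rightarrow X$ through a common smooth model obtained by a sequence of blow-ups along smooth centres and flop-type modifications dictated by the Minimal Model Program, and transport derived categories along each elementary step using Orlov's blow-up formula (which contributes semiorthogonal summands supported on the discrepant exceptional divisors) and Bridgeland's flop equivalence (which contributes nothing). Composing these steps produces the admissible embedding $\DB(\XX) \hookrightarrow \DB(Z)$, the complementary summands being supported exactly on the divisors with positive discrepancy. When $Z$ is crepant over $X$ there are no such divisors, so the embedding is an equivalence; equivalently, both $\DB(\XX)$ and $\DB(Z)$ are then relatively Calabi--Yau over $X$ (trivial relative Serre functor, by Theorem \ref{main1} and by crepancy of $Z$ respectively), and a relatively Calabi--Yau category admits no nontrivial $X$-linear semiorthogonal decomposition, so the admissible image of the fully faithful $\Phi$ must exhaust $\DB(Z)$. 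The delicate point --- and the reason the assertion is still only conjectural --- is the explicit control of the flop steps in the MMP of a quotient singularity in arbitrary dimension.
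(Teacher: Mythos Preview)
The statement you are attempting to prove is labelled a \emph{Conjecture} in the paper, and the paper does not claim to prove it. Immediately after stating it, the author recalls that the crepant case is known from \cite{BKR} only for $\dim V \le 3$ or in the symplectic setting, and then proves merely the very special instance where $V$ is a vector space and $\G$ is cyclic acting by scalars (Theorem \ref{main3}). There is therefore no proof in the paper against which to compare your proposal.

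For that special case the paper's method is entirely different from yours: it is an explicit tilting computation. One blows up $0$ in $V$ and in $V/\G$, observes that the cyclic cover $p:\tilde V\to\widetilde{V/\G}$ pushes the characters $\OO_{\tilde V}\otimes\chi_j$ to the line bundles $t^*\OO_{\mathbb{P}^{n-1}}(-j)$, and matches the resulting endomorphism algebra with the one appearing in Kuznetsov's Theorem \ref{kuznetsov}. No Fourier--Mukai kernels, no BKR criterion, and no MMP steps are used.

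Your sketch, by contrast, outlines a general strategy (Fourier--Mukai transform on a fibre product, spanning-class criterion, then an MMP-style factorisation into blow-ups and flops), and you yourself flag the essential gaps: the BKR dimension bound fails for $\dim X\ge 4$, and the control of the flop steps in arbitrary dimension is not available. This is an honest assessment, but it means your proposal is not a proof --- it is a programme, exactly at the level of difficulty that makes the statement a conjecture. The ``relatively Calabi--Yau forces no nontrivial semiorthogonal complement'' argument you invoke at the end would indeed upgrade full faithfulness to an equivalence in the crepant case, but it presupposes the fully faithful embedding, which is precisely the missing ingredient.
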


The last part of this conjecture has been proved in \cite{BKR} if $\dim V \leq 3$ or if $V$ is symplectic and $\G$ acts by symplectic automorphisms. This result is known as the categorical McKay correspondence. We will prove a very special case of the conjecture in the context of cyclic groups acting by translations on finite dimensional vector spaces.

\begin{theo} \label{main3}
Let $V$ be a vector space of dimension $n$ and let $\G$ be a cyclic group of order $d$ acting by translations on $V$ (with $d$ dividing $n$). Then we have an equivalence:

\begin{equation*}
\DB(Coh^{\G}(V)) \simeq \T_0,
\end{equation*}
where $\T_0$ is the categorical strongly crepant resolution of $V/ \G$ constructed by Kuznetsov (see Theorem \ref{kuznetsov}). In particular, there is a fully faithful embedding:
\begin{equation*}
\DB(Coh^{\G}(V)) \hookrightarrow \DB(\widetilde{V/ \G}),
\end{equation*}
where $\widetilde{V/ \G}$ is the resolution of $V/ \G$ obtained by blowing-up $0$. In the special case where $d=n$ (so that $\widetilde{V/ \G}$ is a crepant resolution of $V / \G$), there is an equivalence:
\begin{equation*}
\DB(Coh^{\G}(V)) \simeq \DB(\widetilde{V/ \G}),
\end{equation*}

\end{theo}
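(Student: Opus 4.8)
\emph{Proof proposal.} The plan is to realise both $\DB(Coh^{\G}(V))$ and $\T_0$ as the bounded derived category of modules over one and the same sheaf of algebras on the affine variety $X:=V/\G$. Theorem \ref{main1} applies: $\G=\mathbb{Z}/d$ acts on $V$ by the scalars of order $d$, hence on the one-dimensional space $\bigwedge^n V^{*}$ by the $n$-th power of the corresponding character, which is trivial because $d\mid n$; so $\omega_V$ is $\G$-equivariantly trivial. Thus $\DB(Coh^{\G}(V))\simeq\DB(X,\pi_*^{\G}\mathcal{E}nd(\GG))$, where $\pi:V\to X$ is the quotient map and $\GG=\bigoplus_{i=0}^{d-1}\OO_V\otimes\chi_i$ runs over the $d$ characters $\chi_0,\dots,\chi_{d-1}$ of $\G$. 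On the other side, Theorem \ref{kuznetsov} gives $\T_0\simeq\DB(X,q_*\mathcal{E}nd(\AA))$, where $q:\X\to X$ is the blow-up of the origin, $\X=\mathrm{Tot}(\OO_{\mathbb{P}^{n-1}}(-d))$ with bundle projection $t:\X\to\mathbb{P}^{n-1}$, and $\AA=\bigoplus_{i=0}^{d-1}t^*\OO_{\mathbb{P}^{n-1}}(-i)$. Hence it is enough to produce an isomorphism of sheaves of $\OO_X$-algebras $\pi_*^{\G}\mathcal{E}nd(\GG)\cong q_*\mathcal{E}nd(\AA)$.

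Since $X$ is affine and both algebras are coherent over $\OO_X$ (the map $\pi$ is finite, and Theorem \ref{kuznetsov} already guarantees $\RR q_*\mathcal{E}nd(\AA)$ is concentrated in degree zero), this amounts to an isomorphism of the algebras of global sections over $R:=\Gamma(X,\OO_X)=\mathrm{Sym}(V^{*})^{\G}$. I would compute the equivariant algebra via invariants: its $(i,j)$-block is $\mathrm{Hom}_{Coh^{\G}(V)}(\OO_V\otimes\chi_i,\OO_V\otimes\chi_j)=(\mathrm{Sym}(V^{*})\otimes\chi_{j-i})^{\G}=\bigoplus_{k\geq 0,\ k\equiv j-i\ (d)}\mathrm{Sym}^k(V^{*})$, with composition induced by multiplication in $\mathrm{Sym}(V^{*})$. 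For the geometric algebra I would use $t_*\OO_{\X}=\bigoplus_{m\geq 0}\OO_{\mathbb{P}^{n-1}}(dm)$ (valid because $\X$ is the total space of $\OO(-d)$), so that $\Gamma(\X,t^*\OO(\ell))=\bigoplus_{m\geq 0}\mathrm{Sym}^{\ell+dm}(V^{*})=\bigoplus_{k\geq 0,\ k\equiv\ell\ (d)}\mathrm{Sym}^k(V^{*})$; thus the $(i,j)$-block of $\mathrm{End}_{\X}(\AA)=\bigoplus_{i,j}\Gamma(\X,t^*\OO(i-j))$ is $\bigoplus_{k\geq 0,\ k\equiv i-j\ (d)}\mathrm{Sym}^k(V^{*})$, again with multiplication from $\mathrm{Sym}(V^{*})$. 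Relabelling the index set by $i\mapsto -i\bmod d$ then identifies the two algebras block by block, compatibly with composition; since this relabelling is the identity on each $\mathrm{Sym}^k(V^{*})$ summand it is in particular $R$-linear, so $\pi_*^{\G}\mathcal{E}nd(\GG)\cong q_*\mathcal{E}nd(\AA)$ as $\OO_X$-algebras, and therefore $\DB(Coh^{\G}(V))\simeq\T_0$.

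The remaining assertions follow formally. Since $\T_0$ is an admissible component of the semiorthogonal decomposition of $\DB(\X)$ in Theorem \ref{kuznetsov}, the inclusion $\T_0\hookrightarrow\DB(\X)=\DB(\widetilde{V/\G})$ is fully faithful; composing with the equivalence above gives the desired embedding. When $d=n$ one has $\tfrac{n}{d}-1=0$, so the list $i_*\big(\BB\otimes\OO_E((\tfrac{n}{d}-1)E)\big),\dots,i_*\big(\BB\otimes\OO_E(E)\big)$ is empty and the semiorthogonal decomposition collapses to $\DB(\X)=\T_0$; together with $\DB(Coh^{\G}(V))\simeq\T_0$ this yields $\DB(Coh^{\G}(V))\simeq\DB(\widetilde{V/\G})$.

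The step I expect to require the most care is the explicit identification of $\mathrm{End}_{\X}(\AA)$: one must pin down the twists on $\mathbb{P}^{n-1}=\mathbb{P}(V)$ and the fibre coordinate of $\mathrm{Tot}(\OO(-d))$ (so that $t^*\OO(-d)=\OO_{\X}(-E)$ and $\Gamma(\mathbb{P}^{n-1},\OO(j))=\mathrm{Sym}^j(V^{*})$) in order to see that the grading by $k\bmod d$ on $\mathrm{Sym}(V^{*})$ appearing on the blow-up side really matches the character grading on the equivariant side. Everything else is bookkeeping with tilting objects and semiorthogonal decompositions already established in the cited results.
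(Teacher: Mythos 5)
Your proof is correct, but it takes a genuinely different route from the paper's. The paper does not match Theorem \ref{main1} against Theorem \ref{kuznetsov} over the affine base; instead it works geometrically on the blow-up: it passes to $\V$, the blow-up of $V$ at the origin, realizes $\X$ as the quotient of $\V$ by $\G$ via the $d$-to-$1$ cover $p : \V \rightarrow \X$ ramified along the zero section, computes $p_*^{\G}(\OO_{\V}\otimes\chi_j) = t^*\OO_{\mathbb{P}^{n-1}}(-j)$, and uses the vanishing $\RR^i q_* t^*\OO_{\mathbb{P}^{n-1}}(-j)=0$ for $1-d\le j\le d-1$, $i>0$, to conclude that the equivariant bundle $\bigoplus_j \OO_{\V}\otimes\chi_j$ is tilting with respect to $q_* p_*^{\G}$ and that its pushed-forward endomorphism algebra is literally $q_*\mathcal{E}nd(\AA)$. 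You instead stay on the affine quotient $X=V/\G$, quote Theorem \ref{main1} for the equivariant side, and compute both endomorphism algebras by hand as block matrices whose $(i,j)$-entries are the $\mathbb{Z}/d$-graded pieces of $\mathrm{Sym}(V^*)$; your bookkeeping is right (the sign discrepancy between the $k\equiv j-i$ grading on the equivariant side and the $k\equiv i-j$ grading on the blow-up side is real and is fixed by the relabelling $i\mapsto -i \bmod d$, which permutes the idempotents and hence is an algebra isomorphism), and since $X$ is affine the passage between global sections and sheaves of algebras is harmless. What the paper's route buys is a geometric explanation of why the two algebras agree --- the two tilting bundles correspond under $p_*^{\G}$ --- which also makes the comparison with Kuznetsov's semiorthogonal decomposition transparent; what your route buys is a more elementary, purely algebraic verification that never leaves the affine quotient. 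Your treatment of the two remaining assertions (full faithfulness from admissibility of $\T_0$, and the collapse $\T_0=\DB(\X)$ when $d=n$) coincides with the paper's.
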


If $\G$ is a cyclic group acting on $V$ by translations, the blow-up of $V / \G$ along $0$ is the ``smallest'' geometric resolution of singularities known for $V / \G$. Hence, in that specific case, I believe that Theorem \ref{main3} should render a proof of Conjecture \ref{conjconj} tractable.

\begin{proof}

We denote $X := V/ \G$ and $q : \X \rightarrow X$ the blow-up of $X$ along $0$. Let $q : \tilde{V} \rightarrow V$ the blow-up of along $0$ and consider the projection $p_*^{G} : \DB(Coh^{\G}(\V) \rightarrow \DB(\X)$, where $p : \V \rightarrow \X$ is the quotient map and $\FF \rightarrow \FF^{\G}$ is the functor of invariants.

As $V$ is smooth, the category $\DB(Coh^{\G}(V))$ is a full admissible subcategory of $\DB(Coh^{\G}(\V)$. Let $\chi_1, \cdots, \chi_{d-1}$ be the characters of $\G$. The same argument as in the proof of Theorem \ref{main1} shows that the vector bundle:

\begin{equation*}
 \OO_{\V} \otimes \chi_0 \oplus \cdots \oplus \OO_{\V} \otimes \chi_{d-1}
\end{equation*}

is a tilting bundle for $\DB(Coh^{\G}(V))$ with respect to $p_*^{\G}$. Recall that $\X$ is the total space of $ t : \OO_{\mathbb{P}^{n-1}}(-d) \rightarrow \mathbb{P}^{n-1}$. The quotient map $p : \V \rightarrow \X$ is a $d$ to $1$ cover ramified along the zero section of $t$. Hence, we have:

\begin{equation*}
p_* \OO_{\V} = t^* \OO_{\mathbb{P}^{n-1}}(-d+1) \oplus \cdots \oplus t^* \OO_{\mathbb{P}^{n-1}}.
\end{equation*}

In particular:

\begin{equation*}
p_*^{\G} (\OO_{\V} \otimes \chi_j) = t^* \OO_{\mathbb{P}^{n-1}}(-j).
\end{equation*}

Since we have the vanishing $\RR^i q_* t^* \OO_{\mathbb{P}^{n-1}}(-j)$ for all $1-d \leq j \leq d-1$ and all $i >0$, we deduce that $\OO_{\V} \otimes \chi_0 \oplus \cdots \oplus \OO_{\V} \otimes \chi_{d-1}$ is a tilting bundle for $\DB(Coh^{\G}(V))$ with respect to $q_* p_*^{\G}$. As a consequence, we have:

\begin{equation*}
\DB(Coh^{\G}(V)) \simeq \DB(X, q_* p_*^{\G} \left( \mathcal{E}nd ( \OO_{\V} \otimes \chi_0 \oplus \cdots \oplus \OO_{\V} \otimes \chi_{d-1}) \right)).
\end{equation*}

From the computations just above, we immediately find that:
\begin{equation*}
 q_* p_*^{\G} \left( \mathcal{E}nd ( \OO_{\V} \otimes \chi_0 \oplus \cdots \oplus \OO_{\V} \otimes \chi_{d-1}) \right) = q_* \mathcal{E}nd( t^* \OO_{\mathbb{P}^{n-1}}(-d+1) \oplus \cdots \oplus t^* \OO_{\mathbb{P}^{n-1}}).
\end{equation*}

We then deduce that:
\begin{equation*}
\DB(Coh^{\G}(V)) \simeq \T_0,
\end{equation*}

where $\T_0$ is the categorical strongly crepant resolution of $X$ constructed by Kuznetsov (see Theorem \ref{kuznetsov}).

\bigskip

If $d=n$, one notices that $\T_0 = \DB(\X)$, so that $\DB(Coh^{\G}(V)) \simeq \DB(\X)$.

\end{proof}

\bigskip

Note that the rough idea for the proof of Theorem \ref{main3} is already present in \cite{bridge}.

\end{section}

\bibliographystyle{alpha}

\bibliography{bibliHKC}

\begin{thebibliography}{HVdB07}

\bibitem[Abu]{abuaf-HKC}
Roland Abuaf.
\newblock Compact hyperk\"ahler categories : basic questions.
\newblock \textit{In preparation}.

\bibitem[Abu13a]{abuaf2}
Roland Abuaf.
\newblock Categorical crepant resolutions and the {T}its-{F}reudenthal magic
  square.
\newblock arXiv:1307.1675, 2013.

\bibitem[Abu13b]{theseabuaf}
Roland Abuaf.
\newblock {Dualit\'e projective homologique et r\'esolutions cat\'egoriques des
  singularit\'es}.
\newblock Th\`ese pour le grade de Docteur es Sciences de l'Universit\'e de
  Grenoble, 2013.

\bibitem[Abu13c]{abuaf1}
Roland Abuaf.
\newblock Wonderful resolutions and categorical crepant resolutions of
  singularities.
\newblock {\em Journal f\"ur die reine und angewandte Mathematik}, ahead of
  print, 2013.

\bibitem[BKR01]{BKR}
Tom Bridgeland, Alastair King, and Miles Reid.
\newblock The {M}c{K}ay correspondence as an equivalence of derived categories.
\newblock {\em J. Amer. Math. Soc.}, 14(3):535--554 (electronic), 2001.

\bibitem[Blu07]{blume}
Mark Blume.
\newblock {Mc{K}ay correspondence and $G$-{H}ilbert schemes }.
\newblock {Dissertation der Fakult\"at f\"ur Mathematik und Physik der
  Eberhard-Karls-Universit\"at T\"ubingen zur Erlangung des Grades eines
  Doktors der Naturwissenschaften vorgelegt}, 2007.

\bibitem[BO02]{BO}
A.~Bondal and D.~Orlov.
\newblock Derived categories of coherent sheaves.
\newblock In {\em Proceedings of the {I}nternational {C}ongress of
  {M}athematicians, {V}ol. {II} ({B}eijing, 2002)}, pages 47--56. Higher Ed.
  Press, Beijing, 2002.

\bibitem[Bri04]{bridge}
Tom Bridgeland.
\newblock Derived categories and minimal models.
\newblock 2004.
\newblock \textit{Unpublished preprint}.

\bibitem[Fuj75]{Fujiki}
Akira Fujiki.
\newblock On resolutions of cyclic quotient singularities.
\newblock {\em Publ. Res. Inst. Math. Sci.}, 10(1):293--328, 1974/75.

\bibitem[HVdB07]{HVDB}
Lutz Hille and Michel Van~den Bergh.
\newblock Fourier-{M}ukai transforms.
\newblock In {\em Handbook of tilting theory}, volume 332 of {\em London Math.
  Soc. Lecture Note Ser.}, pages 147--177. Cambridge Univ. Press, Cambridge,
  2007.

\bibitem[Kaw09]{kawa}
Yujiro Kawamata.
\newblock Derived categories and birational geometry.
\newblock In {\em Algebraic geometry---{S}eattle 2005. {P}art 2}, volume~80 of
  {\em Proc. Sympos. Pure Math.}, pages 655--665. Amer. Math. Soc., Providence,
  RI, 2009.

\bibitem[KL12]{kuz2}
Alexander Kuznetsov and Valery Lunts.
\newblock Categorical resolutions of irrational singularities.
\newblock 2012.
\newblock arXiv:1212.6170.

\bibitem[Kre09]{kresch}
Andrew Kresch.
\newblock On the geometry of {D}eligne-{M}umford stacks.
\newblock In {\em Algebraic geometry---{S}eattle 2005. {P}art 1}, volume~80 of
  {\em Proc. Sympos. Pure Math.}, pages 259--271. Amer. Math. Soc., Providence,
  RI, 2009.

\bibitem[Kuz08]{kuz1}
Alexander Kuznetsov.
\newblock Lefschetz decompositions and categorical resolutions of
  singularities.
\newblock {\em Selecta Math. (N.S.)}, 13(4):661--696, 2008.

\bibitem[Nee96]{Neeman}
Amnon Neeman.
\newblock The {G}rothendieck duality theorem via {B}ousfield's techniques and
  {B}rown representability.
\newblock {\em J. Amer. Math. Soc.}, 9(1):205--236, 1996.

\bibitem[Nir08]{nironi}
Fabio Nironi.
\newblock Grothendieck duality for {D}eligne-{M}umford stacks.
\newblock arXiv:0811.1955, 2008.

\bibitem[Pes84]{peskin}
Barbara~R. Peskin.
\newblock On the dualizing sheaf of a quotient scheme.
\newblock {\em Comm. Algebra}, 12(15-16):1855--1869, 1984.

\bibitem[To{\"e}12]{Toen}
Bertrand To{\"e}n.
\newblock Derived {A}zumaya algebras and generators for twisted derived
  categories.
\newblock {\em Invent. Math.}, 189(3):581--652, 2012.

\bibitem[TV07]{TV}
Bertrand To{\"e}n and Michel Vaqui{\'e}.
\newblock Moduli of objects in dg-categories.
\newblock {\em Ann. Sci. \'Ecole Norm. Sup. (4)}, 40(3):387--444, 2007.

\bibitem[vdB04]{vdb2}
Michel van~den Bergh.
\newblock Non-commutative crepant resolutions.
\newblock In {\em The legacy of {N}iels {H}enrik {A}bel}, pages 749--770.
  Springer, Berlin, 2004.

\bibitem[Vis89]{vistoli}
Angelo Vistoli.
\newblock Intersection theory on algebraic stacks and on their moduli spaces.
\newblock {\em Invent. Math.}, 97(3):613--670, 1989.

\bibitem[Yek]{yeku}
Amon Yekutieli.
\newblock Rigidity, residues and duality for {D}{M} stacks.
\newblock \textit{In preparation}.

\end{thebibliography}

\end{document}